\renewcommand{\d}{{\mathrm{d}}} 
\newcommand\be{\begin{equation}}
\newcommand\ee{\end{equation}}
\newcommand\bea{\begin{eqnarray}}
\newcommand\eea{\end{eqnarray}}
\newcommand\bi{\begin{itemize}}
\newcommand\ei{\end{itemize}}
\newcommand\ben{\begin{enumerate}}
\newcommand\een{\end{enumerate}}
\newcommand\bc{\begin{center}}
\newcommand\ec{\end{center}}
\newcommand\ba{\begin{array}}
\newcommand\ea{\end{array}}
\newcommand{\C}{\ensuremath{\mathbb{C}}}
\newcommand{\Z}{\ensuremath{\mathbb{Z}}}
\newcommand{\foh}{\frac{1}{2}}  
\newtheorem{thm}{Theorem}[section]
\newtheorem{lem}[thm]{Lemma}
\theoremstyle{definition}
\newcommand{\tht}{\theta}
\newcommand{\cst}{\alpha_0}
\numberwithin{equation}{section}
\begin{document}

\title{The Integral of the  Riemann $\xi$-Function}

\author{Jeffrey C. Lagarias}
\author{David Montague}

\date{August 18, 2011}
\thanks{Work of the authors was supported by NSF grant DMS-0801029}

\begin{abstract}
{This paper studies the
integral of the Riemann $\xi$-function defined by $\xi^{(-1)}(s)= \int_{1/2}^{s} \xi(w) dw$.
More generally,  it studies a one-parameter family of functions given
by Fourier integrals  and satisfying a functional equation.
Members of this family   are shown to
have only finitely many zeros on the critical line, with $\xi^{(-1)}(s)$ 
having exactly one  zero on the critical line, at $s=\frac{1}{2}.$
It is also shown there are zeros  of $\xi^{(-1)}(s)$ that  lie arbitrarily far away from the critical line.
An analogue  of the de-Bruijn-Newman constant is introduced for  this family, 
 and shown to be infinite.}
\end{abstract}
\maketitle

\setcounter{equation}{0}

\section{Introduction}

The Riemann $\xi$-function is the entire function defined by the  formula
 \begin{equation}\label{eq100}
 \xi(s) = \foh s(s-1)\pi^{-\frac{s}{2}}\Gamma(\frac{s}{2})\zeta(s).
  \end{equation}
The $\xi$-function satisfies the functional equation $\xi(s) = \xi(1-s)$, and 
 its zeros are exactly the non-trivial zeros of the Riemann
zeta function $\zeta(s)$, those that lie in the critical strip $0< Re(s) <1$.
The rescaled function  $\Xi(z) := \xi(\frac{1}{2} + iz) =\xi(\frac{1}{2}- iz)$,
obtained
using the variable change $s= \frac{1}{2} + iz$
which sends the critical line 
$Re(s) =\frac{1}{2}$ to the real $z$-axis,
 has the Fourier integral representation
\begin{equation}\label{eq101}
\Xi(z)= 2\int_{0}^{\infty} \Phi(u) \cos zu \, du,
\end{equation}
in which
$$
\Phi(u):=\sum_{n=1}^{\infty} (4 \pi^2 n^4 e^{\frac{9}{2}u} - 6\pi n^2 e^{\frac{5}{2}u}) \
\exp( -\pi n^2 e^{2u}),
~~0 < u < \infty,
$$
is a rapidly decreasing function.
Here $\Xi(z)$ is the original function introduced by Riemann,
see Edwards \cite[p. 18]{Ed74}.

We define the integral of the $\xi$-function to be
\begin{equation}\label{eq102}
\xi^{(-1)}(s) := \int_{\frac{1}{2}}^s \xi(z) dz.
\end{equation}
It  satisfies the functional equation 
\begin{equation}\label{eq103}
\xi^{(-1)}(s)= - \xi^{(-1)}(1-s).
\end{equation}
The rescaled function\footnote{The factor of $-i$ is included here since
$ \frac{d}{ds}=-i \,\frac{d}{dz}$, to make $\Xi^{(-1)}(z)$  real-valued on the real axis.}
 $\Xi^{(-1)}(z) := -i\,\xi^{(-1)}(\frac{1}{2} + iz)$ has the Fourier integral representation
\begin{equation}\label{eq104}
\Xi^{(-1)}(z) = 2\int_{0}^{\infty} \Phi(u) (\frac{\sin zu}{u}) \, du.
\end{equation}
This paper studies the locations of zeros of  this function, and of 
other entire functions related to $\xi^{(-1)}(s),$ defined  below. 

It is a pleasure to dedicate this paper to Akio Fujii, in view
of his long-standing interest in location of the zeros of the Riemann zeta
function (e.g. \cite{Fuj02}, \cite{Fuj03}, \cite{Fuj04}).

\subsection{Background}

There have been many studies of properties of the Riemann $\xi$-function.
This function  motivated the study of functions in the Laguerre-P\'{o}lya class (see P\'{o}lya \cite{Pol27b},
Levin \cite[Chap. 8]{Lev80}),
to which the function $\Xi(z)$ would belong if the Riemann hypothesis
were true. It motivated the study of 
properties of entire  functions represented by Fourier integrals
that are real and bounded on the real axis (see  P\'{o}lya \cite {Pol26a}, \cite{Pol26b}, \cite{Pol27a},
Titchmarsh \cite[Chap. X]{Tit86}, Cardon \cite{Car02})
and  related Fourier transforms (Wintner \cite[Theorems III, IV]{Win36}).
 It  led to the study of 
the effect of various operations on
entire functions, including differential operators and convolution integral operators,  preserving the property
of having zeros on a line (e.g. Craven, Csordas and Smith \cite{CCS87a}, \cite{CCS87},
Craven and Csordas \cite{CC94},
Cardon and Nielsen \cite{CN02}, Cardon and de Gaston \cite{CG05}).
Various necessary conditions for the $\Xi$-function to  to have real
zeros have been verified (Csordas, Norfolk and Varga \cite{CNV86}, Csordas and Varga \cite{CV88}).

In 1976 Newman \cite{New76} introduced
a one-parameter  family of Fourier cosine integrals, given
for real $\lambda$ by
\begin{equation}\label{eq111}
\Xi_{\lambda}(z) := 2\int_{0}^{\infty} e^{\lambda u^2}\Phi(u) \cos zu \, du.
\end{equation}
Here $\Xi_{0}(z) = \Xi(z)$, so this family of functions can be viewed
as deformations of the $\Xi$-function.
 It follows from a 1950 result of de Bruijn \cite[Theorem 13]{deB50} that  
 the entire function $\Xi_{\lambda}(z)$ has only real zeros
for $\lambda \ge \frac{1}{8}.$
Newman \cite{New76} proved that there
exists  a real number  $\lambda_0$ such that\footnote{Our
definition of $\Xi_{\lambda}(t)$ corresponds to the  function $\Xi_{b}(t)$ with $b= -\lambda$
in Newman's paper.}
$\Xi_{\lambda}(t)$ 
has all real zeros for $\lambda \ge \lambda_0$,
and has some nonreal zeros for each $\lambda < \lambda_0$.
The Riemann hypothesis holds if and only if $\lambda_0 \le 0$,
and Newman conjectured that the converse inequality $\lambda_0 \ge 0$ holds.
Newman \cite[Remark 2]{New76} stated that his  conjecture represents a quantitative 
version of the assertion that the Riemann hypothesis, if true, is just
barely true.
The rescaled value $\Lambda := 4\lambda_0$ was later 
named by Csordas, Norfolk and Varga \cite{CNV88}
the {\em de Bruijn-Newman constant,}  
and  they proved that $-50 \le \Lambda $.
Successive authors obtained better bounds obtaining  by finding two zeros of
the Riemann zeta function that were unusually close together.
Successive improvements of examples on close zeta zeros  led to the lower bound
$$
-2.7 \times 10^{-9} < \Lambda.
$$
obtained by  Odlyzko \cite{Odl00}. Recently Ki, Kim  and Lee \cite[Theorem 1]{KKL09}
established that $\Lambda < \frac{1}{2}.$
The conjecture that $\Lambda=0$ is now termed
the de Bruijn-Newman conjecture.
Odlyzko \cite[Sect. 5]{Odl00}  observed that the existence of very close spacings of zeta zeros,
would imply the truth of the
de Bruijn-Newman conjecture.

In another direction, one may  consider the effects of  differentiation on 
the location and spacing of zeros of an entire function $F(z)$.
In  1943 P\'{o}lya \cite[p. 182]{Pol43} conjectured that an entire function $F(z)$ of order less than $2$ that has
only a finite number of zeros off the real axis, has the property that there exists a finite 
$m_0 \ge0$ such that all successive derivatives $F^{(m)}(z)$ for $m \ge m_0$ have only real zeros.
This was proved by Craven, Csordas and Smith \cite{CCS87} in 1987, with a
new proof given by Ki and Kim \cite{KK00} in 2000.
Farmer and Rhoades \cite{FR05} have shown 
(under certain hypotheses) that  differentiation of an entire  function
with only real zeros will yield a function having real zeros whose 
zero distribution on the real line 
is ``smoothed.''   Their results apply to the Riemann $\xi$-function,
and imply that if the Riemann hypothesis holds, then the same will
be true for all derivatives $\xi^{(m)}(s) = \frac{d^m}{ds^m} \xi(s)$, $m \ge 1$.
Various general results are given in Cardon and  de Gaston \cite{CG05}.

Passing to results on derivatives of the  $\xi$ -function, 
in 1983 Conrey \cite{Con83} unconditionally showed 
that the $m$-th derivative $\xi^{(m)}(s)$ of the $\xi$-function necessarily has 
a positive fraction of its zeros falling
on the critical line, and his lower
bound for this  fraction increases towards $1$ as $m$ increases.
In 2006 Ki \cite{Ki06}  proved a conjecture of Farmer and Rhoades,
showing that there exist positive sequences $A_m, C_m$, with $C_m \to 0$
slowly with $m$, such that
$$
\lim_{m \to \infty} A_m \, \Xi^{(2m)}(C_m z) = \cos z.
$$
This result can be viewed as  quantitative version of the assertion  that for the $\xi$-function  differentiation
smooths out the spacings of the zeros, since
$\cos z$ has perfectly spaced zeros.  (See Coffey \cite{Cof09} for a related result.)

In 2009 Ki, Kim and Lee \cite{KKL09} combined differentiation with the de Bruijn-Newman constant. 
For each integer $m \ge 0$ they introduced the family of functions
$$\Xi^{(m)}_{\lambda}(z) :=\frac{d^m}{dz^m} \Xi_{\lambda}(z),$$
 depending on the real parameter $\lambda$. These are given by the Fourier integrals
 \begin{equation}\label{eq117}
 \Xi^{(m)}_{\lambda}(z) 
=\left\{ \begin{array}{ll} \int_{0}^{\infty} e^{\lambda u^2} u^{2n}~\Phi(u) ((-1)^n {\cos {zu}} )~ du 
& \mbox{for}~m= 2n, ~~~~~n \ge 0,\\
& ~\\
\int_{0}^{\infty} e^{\lambda u^2} u^{2n-1}\Phi(u) ((-1)^n {\sin {zu}})  ~du 
& \mbox{for}~m=2n-1,~ n \ge 1.\end{array}  \right.
\end{equation}
To each of these families  they associated a de Bruijn-Newman-like  constant,
first defining
\begin{equation}\label{eq118}
\lambda_m := \inf \{ \lambda: \Xi^{(m)}_{\lambda}(z)~\mbox{has all zeros real} \},
\end{equation}
and then setting $\Lambda^{(m)} := 4 \lambda_m$.
The case  $\Lambda^{(0)}= \Lambda$ recovers the original de Bruijn-Newman constant.
They proved that 
$$
\Lambda^{(0)} \ge \Lambda^{(1)} \ge \Lambda^{(2)} \ge \cdots,
$$
and that
$$
\lim_{m \to \infty} \Lambda^{(m)} \le 0.
$$

Finally we remark that $\xi(s)$ is an even function around
the point $s= \frac{1}{2}$, having there a
Taylor series expansion 
$$
\xi(s) = \sum_{j=0}^{\infty} \frac{c_{2j}}{(2j)!}\, (s- \frac{1}{2})^{2j},
$$
with coefficients $c_{2j}=\xi^{(2j)}(\frac{1}{2})  $ that are real and positive.  
The maximum modulus $M(r) := \max\{ |\xi(\frac{1}{2} +iz)|: |z| = r\}$ is
therefore attained for $i z$ on the real axis. In 1945 Haviland \cite{Hav45} obtained
an asymptotic expansion for $M(r)$ of the shape
$$
M(r) \sim (\frac{1}{2} \pi)^{\frac{1}{4}}(2\pi e)^{-\frac{1}{2} r} r^{\frac{1}{2} r + \frac{7}{4}}
\Big( \sum_{n=0}^{\infty} \frac{C_n}{r^n} \Big),
$$
having $C_0=1.$
From the integral  (\ref{eq102}) we deduce the Taylor expansion 
$$
\xi^{(-1)}(s) = \sum_{j=0}^{\infty} \frac{c_{2j}}{(2j+1)!}\, (s- \frac{1}{2})^{2j+1},
$$
manifestly showing that  $\xi^{(-1)}(s)$ is  an odd function around $s= \frac{1}{2}.$
Coffey (\cite{Cof04}, \cite{Cof09})  found integral formulas for the coefficients
$c_{2j}$ and determined their asymptotics as $j \to \infty$.

%
%

\subsection{Present Work}
To add perspective to the results above,  we
study  the effect of the  inverse operation of integration applied to 
the Riemann $\xi$-function  on the zeros  of 
the resulting function. 
Since differentiation seems to smooth the
distribution of zero spacings, we may anticipate that integration will ``roughen''
their distribution, and even force zeros off the critical line.  Our
object is to obtain quantitative information in this direction. 
We study several variants of  the function $\xi^{(-1)}(s)$,
including a family of  functions defined  in analogy with  $\Xi_{\lambda}(z)$ above.

Based on the Fourier integral representation (\ref{eq104}), we
define an  analogue for $m=-1$
of the one-parameter  families  of
functions $\Xi_{\lambda}^{(m)}$ studied by Ki et al. \cite{KKL09}, as follows.
Given a real $\lambda$,  set 
\begin{equation}\label{eq122}
\Xi^{(-1)}_{\lambda}(z) := 2\int_{0}^{\infty}e^{\lambda u^2} \Phi(u) \left( \frac{\sin zu}{u}\right) du.
\end{equation}
The functions $\Xi^{(-1)}_\lambda(z)$  are odd functions, are real on the real axis, and they
satisfy $\frac{d}{dz} \Xi^{(-1)}_{\lambda}(z) = \Xi_{\lambda}(z).$  
For this family we may define
 a de Bruijn-Newman constant  for $m= -1$, 
by analogy with the definition above: we  first set
\begin{equation}\label{eq124}
\lambda_{-1} = \inf \{ \lambda:~ \Xi^{(-1)}_{\lambda}(z) ~~\mbox{has all zeros real}\}
\end{equation}
and then set
$\Lambda^{(-1)} := 4 \lambda_{-1}.$  
In the paper we will show
 that $\Lambda^{(-1)} = +\infty$.
That is,   we show that for each real  $\lambda$ the function
$\Xi_{\lambda}^{(-1)}(z)$ has at least one non-real zero; in fact, it has
infinitely many non-real zeros.

In another direction, concering the function $\xi^{(-1)}(s)$ defined by (\ref{eq102}), we
introduce a constant of integration $\cst \in \C$, and  define
\begin{equation}\label{eq126}
\xi^{(-1)}(s; \cst) : =    \xi^{(-1)}(s)+\cst  =\int_{\frac{1}{2}}^s \xi(w)dw + \cst.
\end{equation}
The functional equation for $\xi^{(-1)}(s)$ then yields
$$
\xi^{(-1)}(s; \cst) = -\xi^{(-1)}(1-s, -\cst).
$$
The problem of determining  the zero set  of $\xi^{(-1)}(s; -\cst)$ with 
integration constant $-\cst$ is the same as that of detemining
the  set of points where $\xi^{(-1)}(s)= \cst$, which we call
the $\cst$-value set of $\xi^{(-1)}$,
and denote $V(\xi^{(-1)}; \cst).$   We  obtain detailed information on
the value sets, showing that for all but two values of $\cst$ only
finitely many zeros are on the critical line, and 
for all values of $\cst$ there are zeros arbitrarily
far off the critical line.

We give precise statements of results in Section 2; we then 
 discuss consequences of these results. Sections 3 to 5 give proofs.
In Section 3 we collect preliminary results needed for  proofs of these
results
 In Sections 4 and 5 we give proofs of  the main theorems.
In the final Section 6  we present numerical results on zeros of $\xi^{(-1)}(s)$
and related functions, and raise some open questions. \medskip


 \paragraph{\bf Acknowledgments.}
 We thank Henri Cohen for useful discussions and computations
 reported in Section \ref{sec6}, and
 Jon Bober for help with plots of this data. 
 We thank the reviewer for 
   many detailed and helpful corrections, motivating  a 
   substantive revision
   of the proof of Theorem ~\ref{th22}. We thank
  P\"{a}r Kurlburg for  useful conversations, and  Steven Finch
  for noting some  misprints. 
   Some work of the first author was done while visiting MSRI,
      as part of the Arithmetic Statistics  Program. MSRI  is
    supported by the National Science Foundation. \\

%

\section{Results}

We study   the set of real zeros of functions in the family 
$\Xi_{\lambda}^{(-1)}(z)$, and  determine
information on   the zero sets of $\xi^{(-1)} (s) - \cst$
for arbitrary values $\cst \in \C$. We obtain  two main results.

%

\subsection{Behavior of\ \ $\Xi_{\lambda}^{(-1)}(z)$}

The first result concerns the behavior of the function $\Xi_{\lambda}^{(-1)}(z)$
defined in (\ref{eq122}) on the real axis.

%
%
\begin{thm}\label{th21}
For real $\lambda$, 
the functions 
 $\Xi_{\lambda}^{(-1)}(z)$
 have the following properties.
\begin{enumerate}
\item
For each real $\lambda$, one has 
$$
\lim_{ t \to \infty} \Xi_{\lambda}^{(-1)}( t) = A_0,
$$
where $A_0$ is a nonzero constant independent of $\lambda$ given by
$$
A_0 := \pi \Phi(0) \approx 2.80668.
$$
The value $A_0=  \frac{\pi}{2}\left(4\tht^{''}(1) + 6\tht^{'}(1)\right)$, taking
$\theta(z)= \sum_{n \in \mathbb{Z}} e^{-\pi n^2 z}$.

\item
For each real $\lambda$, the function $\Xi_{\lambda}^{(-1)}(t)$
has finitely many  zeros on the real $t$-axis, and has
infinitely many non-real zeros.  The zeros on the real axis always include
 a zero at $t=0$, and for $\lambda \le 0$ this 
is the only real zero of $\Xi_{\lambda}^{(-1)}(t)$.
\end{enumerate}
\end{thm}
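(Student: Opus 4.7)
The plan is to treat the two parts sequentially; part~(1) supplies the asymptotic behavior that drives most of part~(2).

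For part~(1), I would extract the Dirichlet integral $\int_{0}^{\infty}\sin(tu)/u\,du = \pi/2$ (valid for $t > 0$) by rewriting
\[
\Xi_{\lambda}^{(-1)}(t) \;=\; \pi\Phi(0) \;+\; 2\int_{0}^{\infty} \bigl(e^{\lambda u^{2}}\Phi(u) - \Phi(0)\bigr)\,\frac{\sin tu}{u}\,du,
\]
and then show the remainder integral vanishes as $t\to\infty$. The function $h(u) := (e^{\lambda u^{2}}\Phi(u) - \Phi(0))/u$ is continuous on $(0,\infty)$ with a finite limit as $u\to 0^{+}$ (the numerator vanishes at $u=0$, so by L'Hopital $h(0^+)=\Phi'(0)$) and decays super-exponentially at infinity since $\Phi(u)\sim\exp(-\pi e^{2u})$. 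Hence $h\in L^{1}(0,\infty)$, and the Riemann--Lebesgue lemma kills the remainder, yielding $\Xi_{\lambda}^{(-1)}(t)\to A_{0} := \pi\Phi(0)$, independent of $\lambda$. The identity $A_{0} = \frac{\pi}{2}(4\theta''(1)+6\theta'(1))$ follows by termwise differentiation of $\theta(z)=\sum_{n\in\Z}e^{-\pi n^{2}z}$ at $z=1$, matching against the explicit series for $\Phi(0)$.

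Part~(2) has four sub-claims. (a)~The zero at $t=0$ is immediate from the odd-function property of the Fourier sine transform. (b)~Finiteness of real zeros follows from part~(1): $\Xi_{\lambda}^{(-1)}(t)\to\pm A_{0}\ne 0$ at $\pm\infty$, so real zeros lie in a compact interval. (c)~For infinitely many non-real zeros I would invoke Hadamard factorization. The function $\Xi_{\lambda}^{(-1)}$ is entire of finite order (order $1$ for $\lambda\le 0$ by comparison with $\xi^{(-1)}$; at most $2$ for $\lambda>0$ via de Bruijn's Gaussian-convolution representation of $\Xi_{\lambda}$), so if its total zero set were finite one could write $\Xi_{\lambda}^{(-1)}(z) = p(z)e^{Q(z)}$ with $\deg Q \le 2$. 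A coefficient-by-coefficient case analysis of $Q$, combining boundedness on $\R$ with the nonzero limits $\pm A_{0}$, forces $\operatorname{Re} Q$ constant and then $p$ constant; but $\Xi_{\lambda}^{(-1)}(0)=0$ forces $p\equiv 0$, a contradiction. Combined with (b), this yields infinitely many non-real zeros.

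For sub-claim~(d), that $t=0$ is the only real zero when $\lambda\le 0$: by oddness it suffices to show $\Xi_{\lambda}^{(-1)}(t) > 0$ for all $t > 0$. Setting $g(u) := e^{\lambda u^{2}}\Phi(u)$ and integrating by parts against the antiderivative $\mathrm{Si}(tu)$ of $\sin(tu)/u$ gives
\[
\Xi_{\lambda}^{(-1)}(t) \;=\; -\,2\int_{0}^{\infty} g'(u)\,\mathrm{Si}(tu)\,du,
\]
with vanishing boundary terms because $\mathrm{Si}(0)=0$ and $g$ decays rapidly. Strict positivity follows from (i)~$g'(u) \le 0$ on $[0,\infty)$ and (ii)~the classical fact $\mathrm{Si}(x) > 0$ for all $x > 0$, together with the observation that $-g'$ is not identically zero (since $g(\infty)=0<g(0)$), so the integrand is strictly positive on a set of positive measure.

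The principal obstacle is input~(i). Since $e^{\lambda u^{2}}$ is non-increasing for $\lambda\le 0$, it reduces to showing $\Phi$ is non-increasing on $[0,\infty)$. With the substitution $y_{n} := \pi n^{2} e^{2u}$ one computes
\[
\Phi'(u) \;=\; e^{u/2}\sum_{n=1}^{\infty} y_{n}\bigl(-8y_{n}^{2}+30y_{n}-15\bigr) e^{-y_{n}},
\]
and the quadratic $-8y^{2}+30y-15$ is non-positive for $y \ge (30+\sqrt{420})/16 \approx 3.156$; this covers every $y_{n}$ with $n\ge 2$ at every $u\ge 0$, and also $y_{1}$ once $u$ is bounded away from $0$. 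The delicate point at $u=0$, where $y_{1}=\pi$ falls just below the threshold, is controlled by $\Phi'(0)=0$, coming from the evenness $\Phi(u)=\Phi(-u)$ (a consequence of the modular transformation $\theta(x)=x^{-1/2}\theta(1/x)$), which lets the positive $n=1$ contribution exactly cancel the negative $n\ge 2$ contributions at $u=0$; a careful estimate then shows the positive $n=1$ term decays faster than the negative tail for $u>0$, giving $\Phi'(u)\le 0$ throughout $[0,\infty)$. I would develop this monotonicity as a preliminary lemma (cf.\ Section~3 of the paper), drawing on P\'olya's classical analyses of the Riemann integrand.
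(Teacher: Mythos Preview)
Your approach to part~(1)---subtracting off the Dirichlet integral $\pi\Phi(0)$ and killing the remainder---is cleaner than the paper's, which instead changes variable to $v=tu$, splits the range into $[0,2\pi\lfloor t^{2/3}\rfloor]$, $[2\pi\lfloor t^{2/3}\rfloor,2\pi\lfloor t^{4/3}\rfloor]$, $[2\pi\lfloor t^{4/3}\rfloor,\infty)$, and estimates each piece by hand to obtain the explicit rate $O(t^{-2/3})$. However, your execution has a real gap: the function $h(u)=(e^{\lambda u^2}\Phi(u)-\Phi(0))/u$ is \emph{not} in $L^1(0,\infty)$. You correctly note that $e^{\lambda u^2}\Phi(u)\to 0$ super-exponentially, but that forces $h(u)\sim -\Phi(0)/u$ as $u\to\infty$, which is not integrable, so Riemann--Lebesgue does not apply as you state it. The fix is short: $h$ extends smoothly to $u=0$ (indeed $h(0)=\Phi'(0)=0$ by the evenness of $\Phi$) and $h'(u)\sim\Phi(0)/u^2$ at infinity, so $h'\in L^1(0,\infty)$; one integration by parts then gives $\int_0^\infty h(u)\sin(tu)\,du=O(1/t)$, actually sharper than the paper's bound.

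For part~(2) your arguments are correct and in places take a different route. For~(c) the paper simply invokes its Lemma~3.2(1) that $\Xi_\lambda^{(-1)}$ has order exactly~$1$ and \emph{maximal type} for every real $\lambda$ (via P\'olya and Paley--Wiener), from which ``infinitely many zeros'' is immediate; your Hadamard case analysis on $Q$ works but is heavier than needed once maximal type is in hand. For~(d) the paper follows Wintner's original alternating-series argument: since $\psi(u):=e^{\lambda u^2}\Phi(u)/u$ is positive and decreasing on $(0,\infty)$ for $\lambda\le 0$, the integrals $\int_{(k-1)\pi/t}^{k\pi/t}\psi(u)\sin(tu)\,du$ alternate in sign with decreasing magnitude, so the full integral is positive. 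Your integration-by-parts route through $\mathrm{Si}$ is an equally valid alternative resting on the same monotonicity input $\Phi'\le 0$; the paper simply cites Wintner~(1935) for this, whereas you sketch a direct proof whose delicate window $0<u\lesssim 0.003$ (where $y_1=\pi e^{2u}$ sits just below the root $\approx 3.156$) would still need to be made rigorous.
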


Since $\Xi_{\lambda}^{(-1)}(z)$ is an odd function, we have
$$
\lim_{ t \to -\infty} \Xi_{\lambda}^{(-1)}( t) = -A_0.
$$
This  theorem shows that the ``just barely true''
heuristic for the Riemann hypothesis holds in a particularly
strong fashion for the operation of integration.
Namely, integration drives all but finitely many
zeros off the real axis for
every function in  the  family $\Xi_{\lambda}^{(-1)}(z)$.

The result  (1) above  is derived 
directly from the oscillatory integral representation for $\Xi_{\lambda}^{(-1)}(t)$;
it corresponds to a Fourier  sine integral (on a half line)
against a function having a singularity at the endpoint $u=0$.
The proof of Theorem \ref{th21} obtains the bound, valid for $t \ge 3$,
$$
\Xi_{\lambda}^{(-1)}(t) = A_0 + O \left ( t^{-2/3}\right),
$$
in which the implied constant  in the error term depends on $\lambda$.

The result (2) above shows that the analogue of the de Bruijn-Newman constant
for this integral of the $\xi$-function  fails to exist; that is,  it establishes
$$
\Lambda^{(-1)}= +\infty.
$$
In  1947 Wintner \cite{Wi47} proved  that 
$\Xi_{0}^{(-1)}(t) >0$ when $t>0$; this fact together with
the functional equation (\ref{eq103})
 implies that $\xi^{(-1)}(s)$ has
no zeros on the critical line except for a zero at $s=\frac{1}{2}.$
Wintner's approach extends to cover the case   $\lambda \le 0$, as stated in (2) above.

%

\subsection{Value Distribution of $\xi^{(-1)}(s)$}

The  second result  concerns the location of  zeros of 
the function $\xi^{(-1)}(s)$ given in (\ref{eq102}).
More generally it studies the distribution of  values $\xi^{(-1)}(s)= \alpha$ of this function.
For an entire function $f(z)$, let $Z(f)$ denote the set
of zeros of $f$, where we count zeros with multiplicity; thus
$Z(f)$ is a multiset.
We define the {\em value set} of $f$ at value $\alpha$ by
$$
V(f ;\alpha) := Z( f- \alpha)=  \{ z \in \C: ~~f(z)-\alpha=0\}.
$$
Theorem~\ref{th21} gives two exceptional limiting values
$$
\lim_{t \to \pm \infty} \xi^{(-1)} (\frac{1}{2} + it) = \pm iA_0.
$$
We show that, aside from these two values $\alpha = \pm i A_0$, 
the locations of all values in $V(\xi^{(-1)}; \alpha)$ are
qualitatively similar in  
asymptotics relating  the real part of zeros to their
imaginary part. In what follows $4\pi e \approx 34.1588$ serves as
as a useful cutoff value.

%
%
\begin{thm}\label{th22}
For the function $\xi^{(-1)}(s)$ 
and  $\cst \in \C$, consider  the set of points  
where $\xi^{(-1)}(s)= \cst.$
All such points  $\rho=\sigma+ it$ having $|t| \le 4\pi e$ lie in a bounded region,
which depends on $\cst$.

(1) For each $\cst \ne \pm i A_0$, 
 all members $\rho$
 having  $|t| \ge 4\pi e$ satisfy
\begin{equation}\label{eq222}
|\sigma| = \frac{\pi}{2} \frac{ |t|}{\log |t|} + O\left(  \frac{|t|}{(\log |t|)^2} \right),
\end{equation}
in which the implied constant in the $O$-symbol depends on $|\cst|$.
For $\cst=  i A_0$ (resp. $\cst= -i A_0$)  this bound applies
when  $t \le -4\pi e$
(resp. $t \ge 4\pi e$).

(2) For $\cst=  i A_0$ (resp. $\cst= -i A_0$)  the upper bound applies
when $t\ge 4\pi e$ (resp. $t \le -4\pi e$) :

\begin{equation} \label{eq223}
|\sigma| \le  \frac{\pi}{2} \frac{ |t|}{\log |t|} + O\left(  \frac{|t|}{(\log |t|)^2} \right),
\end{equation}
\end{thm}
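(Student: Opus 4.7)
The plan is to combine the boundary behavior of $\xi^{(-1)}$ on the critical line supplied by Theorem~\ref{th21} with an asymptotic analysis of the horizontal line integral
$$\xi^{(-1)}(\sigma+it)-\xi^{(-1)}(\tfrac12+it)=\int_{1/2}^{\sigma}\xi(u+it)\,du,$$
carried out via integration by parts and Stirling's formula. The bounded-region statement for $|t|\le 4\pi e$ is the elementary part: Stirling shows that for $t$ in a compact set, $|\xi(u+it)|$ grows super-polynomially in $|u|$, so $|\xi^{(-1)}(s)|\to\infty$ uniformly as $|\sigma|\to\infty$ along any bounded range of $t$, confining every $\cst$-preimage to a box whose size depends on $|\cst|$.

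For $|t|\ge 4\pi e$, write $\xi(w)=\exp(g(w))$ with $g(w)=\log\xi(w)$ and integrate by parts along the horizontal segment:
$$\int_{1/2+it}^{\sigma+it}\xi(w)\,dw=\Bigl[\frac{\xi(w)}{g'(w)}\Bigr]_{1/2+it}^{\sigma+it}+\int_{1/2+it}^{\sigma+it}\frac{g''(w)}{g'(w)^2}\xi(w)\,dw.$$
Using the logarithmic derivative
$$g'(w)=\frac{1}{w}+\frac{1}{w-1}-\tfrac12\log\pi+\tfrac12\psi(w/2)+\frac{\zeta'(w)}{\zeta(w)},$$
Stirling's formula for $\psi$ and standard bounds for $\zeta'/\zeta$ outside the critical strip give $g'(w)=\tfrac12\log(w/2\pi)+O(1)$ in the region of interest, while $g''(w)/g'(w)^2=O(1/(|w|\log^2|w|))$ makes the remainder integral of lower order. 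The boundary contribution at $w=\tfrac12+it$ is negligible because $|\xi(\tfrac12+it)|$ decays exponentially in $|t|$, and the zeros of $g'$ that may fall on the segment can be sidestepped by local contour deformation at bounded cost. The upshot is
$$\xi^{(-1)}(s)-\xi^{(-1)}(\tfrac12+it)=\frac{2\,\xi(s)}{\log(s/2\pi)}\bigl(1+o(1)\bigr).$$

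Setting this equal to $\cst-\xi^{(-1)}(\tfrac12+it)$ and invoking Theorem~\ref{th21} in the form $\xi^{(-1)}(\tfrac12+it)=\operatorname{sgn}(t)\,iA_0+O(|t|^{-2/3})$, one obtains the key relation
$$|\xi(\sigma+it)|\sim\tfrac12\,|\cst-\operatorname{sgn}(t)\,iA_0|\,\log|t|.$$
Stirling for $\xi$ in the sector $|\sigma|=O(|t|/\log|t|)$, reduced via $\xi(s)=\xi(1-s)$ to $\sigma\ge\tfrac12$, gives
$$\log|\xi(\sigma+it)|=\tfrac{\sigma}{2}\log(|t|/2\pi)-\tfrac{\pi|t|}{4}+O(\log|t|);$$
matching the right side to $\log\log|t|+O(1)$ and solving for $\sigma$ produces the asserted asymptotic $|\sigma|=\frac{\pi}{2}\frac{|t|}{\log|t|}+O(|t|/\log^2|t|)$.

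In the exceptional cases $\cst=\pm iA_0$, the prefactor $|\cst-\operatorname{sgn}(t)\,iA_0|$ vanishes when the sign of $t$ matches: for $\cst=iA_0$ and $t\to+\infty$, one has $\cst-\xi^{(-1)}(\tfrac12+it)\to 0$, so the right side of the key relation tends to $0$ rather than a fixed positive constant. This allows $|\xi(\sigma+it)|$ to be smaller and admits preimages with $\sigma$ strictly interior to the asymptotic locus, giving only the upper bound in part~(2); on the opposite side the difference tends to $2iA_0\ne 0$ and part~(1) applies verbatim. The principal technical obstacle is making the integration-by-parts estimate uniform across the relevant region: the factor $1/g'(w)$ has poles at the critical points of $\xi$, which can accumulate near the critical line, so a careful contour deformation, or alternatively a separate argument in a thin strip $|\sigma-\tfrac12|<\epsilon$ where Theorem~\ref{th21} already controls the boundary value directly, is needed to close the argument.
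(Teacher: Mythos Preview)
Your approach is sound in outline but takes a genuinely different route from the paper. The paper proves the upper and lower bounds on $|\sigma|$ separately, via direct size estimates: it introduces an explicit majorant $F(\sigma,t)$ for $|\xi(\sigma+it)|$ (Lemma~\ref{le43}(2)), shows $F$ is monotone in $\sigma$ with $\int_0^{\sigma_1}F\,d\sigma\le 4F(\sigma_1,t)$ (Lemma~\ref{le44}), and for the upper bound controls the argument variation of $\xi$ on a short horizontal segment so that $\bigl|\int_{\sigma_0-2}^{\sigma_0}\xi\bigr|\ge\tfrac12\int_{\sigma_0-2}^{\sigma_0}|\xi|$ via an elementary cosine inequality (your integration-by-parts device is replaced by this argument-variation lemma). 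Your Laplace-type integration by parts yields the sharper asymptotic $\xi^{(-1)}(s)-\xi^{(-1)}(\tfrac12+it)\sim 2\xi(s)/\log(s/2\pi)$ and packages the two inequalities into a single relation, at the cost of having to navigate singularities of $1/g'$. Both approaches then reduce to the same Stirling inversion.

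Two technical points deserve tightening. First, your contour worry near the critical line is real but has a simpler fix than deformation: split the horizontal integral at $\sigma=2$, bound $\int_{1/2}^{2}|\xi(u+it)|\,du=O(e^{-\pi|t|/4}|t|^{5/2})$ directly, and run the integration by parts only on $[2,\sigma]$, where $\zeta'/\zeta=O(1)$, $\xi\ne 0$, and $g'(w)=\tfrac12\log(w/2\pi)+O(1)$ holds uniformly. The remainder $\int_2^\sigma (g''/g'^2)\xi\,du$ is then controlled by the concentration fact $\int_2^{\sigma}|\xi(u+it)|\,du\ll|\xi(\sigma+it)|$ (this is the content of the paper's Lemma~\ref{le44}(2), which you implicitly need). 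Second, your Stirling line $\log|\xi(\sigma+it)|=\tfrac{\sigma}{2}\log(|t|/2\pi)-\tfrac{\pi|t|}{4}+O(\log|t|)$ is not quite right in the target regime $\sigma\asymp |t|/\log|t|$: the $\arctan$ expansion contributes an $O(\sigma^3/t^2)=O(|t|/\log^3|t|)$ term, not $O(\log|t|)$. Fortunately this larger error, once divided by $\log(|t|/2\pi)$ when you solve for $\sigma$, is still $O(|t|/\log^4|t|)$ and is absorbed into the stated $O(|t|/\log^2|t|)$, so your final conclusion survives.
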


This result shows that the value distribution
  of $\xi^{(-1)}(s)$ is qualitatively the same for all values $\cst \ne \pm iA_0$,
as well as for the values $\cst = iA_0$  in the lower half plane, and $\cst = -iA_0$
 in the upper half-plane.
 The two remaining cases in (2) appear to  have
a different distribution; numerical evidence given in \S6 supports
this possibility.
 In the exceptional cases in (2) we suspect that 
 the values remain closer to the critical line, at least to the
 extent that 
$|\sigma| = O (\frac{|t|}{(\log |t|)^2})$
might hold in these two cases. 

Since the functions $\xi^{(-1)}(s)- \cst$ have infinitely
many zeros (because they are entire functions of order $1$
of maximal type, cf. Lemma \ref{le42}), we deduce
that all of them have
zeros arbitrarily far away from the critical line.
(In the case $\cst =  iA_0,$ resp. $-iA_0$,  one must additionally 
show that they have
infinitely many zeros with negative real part, resp. positive real part.)

\subsection{Discussion}

First, Theorem ~\ref{th21} is obtained by viewing
the functions $\Xi^{(-1)}_{\lambda}(t)$ as 
Fourier integrals of functions $\Xi^{(-1)}(z) = \frac{1}{i}\int_{-\infty}^{\infty} \Psi(u) e^{izu} du,$
where the function $\Psi(u)= \frac{1}{u} \Phi(u)$ has a singularity at the 
point $u=0$. Since $\Psi(u)$
is an odd function, this integral can be rewritten as an absolutely convergent integral
 $2 \int_{0}^{\infty} \Phi(u) (\frac{\sin zu}{u}) du.$ 
 The singularity at $u=0$, which occurs since $\Phi(0) \ne 0$, results in
a nonzero integral on the critical line, and this  is the mechanism that
forces zeros off the real axis. In this regard, one may consider 
more generally an oscillatory integral 
$F(z)= \int_{0}^{\infty} \Psi(u) \cos(zu) du$ 
 in which
$\Psi(u)$ is a smooth function with very rapid decay as $u \to \infty$, such that $F(z)$ is an
entire function. Then the order $k$ of  the zero at $x=0$ of a smooth function
$\Psi(u)$  in an oscillatory integral  places an absolute  limit on 
the number of  integrations  $F^{(-j)}(z)$ of $F(z)$ that can be taken
(with any choices of constant of integration) 
to have the property that  $F^{(-j)}(t) \to 0$ holds as 
 $t \to \pm \infty$
for each $1 \le j \le m$; it requires that  $m \le k$.

Second, one may ask how the zeros on the
real axis of $\Xi_{\lambda}^{(-1)}(t)$ behave
as $\lambda \to \infty$. Theorem \ref{th21} implies there are
a finite number of zeros for each $\lambda$. It may be that
this number increases as $\lambda \to \infty$, and that new zeros
are created in pairs at the origin at certain values
of $\lambda$ as it increases, and afterwards have
 a regular behavior as a function of $\lambda.$

Third, one may ask whether the constant $A_0 = 
\frac{\pi}{2}\left(4\tht^{''}(1) + 6\tht^{'}(1)\right)$ appearing
in Theorem~\ref{th21} may possibly have an  arithmetic
interpretation. It is known that the  value $\tht(1)= \frac{\pi^{1/4}}{\Gamma(\frac{3}{4})}$ has 
an arithmetic interpretation in the context of the Arakelov zeta function
studied in Lagarias and Rains \cite[Appendix]{LR03}.

Fourth, in connection with the smoothing property on the zeros distribution  of
taking derivatives of $\xi(s)$, one may inquire concerning the level spacing
distribution of $\xi^{(n)}(s)$ for $n \ge 1$. The GUE conjecture 
asserts that the level spacing distributions of 
 $k$ consecutive normalized zeros of $\xi(s)$ has a limiting distribution specified by
 the GUE distribution.  It seems 
plausible to expect that $\xi^{(n)}(s)$ will have its own level spacing distribution
${\rm GUE}^{(n)}$ which will differ from that of the GUE. It would be interesting
to make a prediction for  ${\rm GUE}^{(n)}$, and if possible,  to find a random matrix model
for it.

Fifth, Theorem \ref{th22} 
gives a functional bound relating  the horizontal and vertical coordinates of individual values.
It remains to determine the asymptotics of the
 vertical distribution of the zeros of $\xi^{(-1)}(s)$, or more
 generally of any fixed value set $\xi^{(-1)}(s) = \cst.$ We might expect these values
to   obey approximately the same asymptotics as that
of the Riemann $\xi$-function, which is 
(\cite[Chap. 15]{Dav80})
$$
N(T) =\frac{1}{\pi} T \log T - \frac{1}{2\pi} \log (\frac{1}{2\pi e}) T + O(\log T),
$$
as least in the main term $\frac{T}{\pi}\log T$ in the asymptotics.
For $\xi^{(-1)}(s)$ we note 
 the weak result $N(T) = O( T\log T)$  follows
from a Jensen's formula estimate from growth of its maximum modulus.

%
\section{Basic Observations}

The  $\xi$-function can be expressed  in terms of a Mellin
 transform of derivatives of the Jacobi theta function 
 $ \vartheta_3(0, q) = \sum_{n \in \Z} q^{n^2} .$
 We make the  variable change $q=e^{-\pi z}$,  and
 on the half-plane  $Re(z)>0$
 define the {\em theta function}
 \[ \tht(z) := \sum_{n=-\infty}^\infty e^{-\pi n^2 z} \]
 We write $z= x+iy$, and 
 mainly consider $\tht(x)$ restricted to the real axis. 
For the derivatives of $\tht$, we have the formulas
\[ 
\tht'(x) = \sum_{n=-\infty}^\infty -\pi n^2 e^{-\pi n^2 x} \ \ \ \ \ \ \ \ \ 
\tht''(x) = \sum_{n=-\infty}^\infty \pi^2 n^4 e^{-\pi n^2 x}. 
\]
The function $\Phi(u)$ given in Sect. 1 is expressible in terms of 
derviatives of the theta function as given in (\ref{eq423}) below.

%
\begin{lem}\label{le41}
The function $\xi(s)$ is given by the Fourier cosine transform
\begin{equation}\label{eq420}
\Xi(z) = \xi(\frac{1}{2} + iz) = 2 \int_{0}^{\infty} \Phi(u) \cos zu \, du
\end{equation}
in which
\begin{equation}\label{eq421}
\Phi(u) = \sum_{n=1}^{\infty} \left(4\pi^2 n^4 e^{\frac{9}{2} u} - 6 \pi n^2 e^{\frac{5}{2}u}\right)
\exp \left( - \pi n^2 e^{2u}\right).
\end{equation}
The function $\Phi(u)$ has the following properties.
\begin{enumerate}
\item
$\Phi(u) = \foh \left( \frac{d^2}{du^2} - \frac{1}{4}\right) \left( e^{\frac{1}{2}u} \tht(e^{2u}) \right).$
\item
$\Phi(u)$ is an even function: $\Phi(u) = \Phi(-u)$.
\item
$\Phi(u)$ decays extremely rapidly on the real axis as $u \to \pm \infty$, with
$$\Phi (u)\ll \exp \left( -e^{|u|} \right)~~\mbox{as}~u \to \pm \infty.$$
\item
$\Phi(u)$ analytically continues to the strip $| \mbox{Im}(u) | < \frac{\pi}{4}$.
For integer $m \ge 0$, it satisfies, allowing only real $t$ in the limit, 
$$
\lim_{t \to \pi/4} \Phi^{(m)}(it) = 0.
$$
\item
$\Phi(u)$ is a strictly decreasing function on $[0, \infty)$.
\end{enumerate}
\end{lem}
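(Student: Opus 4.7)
The Fourier cosine representation (\ref{eq420})--(\ref{eq421}) is classical: I would derive it from $\pi^{-s/2}\Gamma(s/2)\zeta(s) = \int_{0}^{\infty} x^{s/2-1}\omega(x)\,dx$ with $\omega(x) = (\theta(x)-1)/2$, split the integral at $x=1$, symmetrize using $\theta(x^{-1}) = x^{1/2}\theta(x)$, and substitute $x = e^{2u}$, $s = \tfrac12+iz$, citing Edwards \cite{Ed74}, Ch.~1. I would then treat the five structural properties in order. Property (1) is a chain-rule computation: differentiate $g(u) := e^{u/2}\theta(e^{2u})$ twice, form $\tfrac12(\partial_u^2 - \tfrac14)g$, and verify that the resulting series matches (\ref{eq421}); a useful byproduct is the compact identity $\Phi(u) = 3 e^{5u/2}\theta'(e^{2u}) + 2 e^{9u/2}\theta''(e^{2u})$. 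Property (2) is then immediate: the modular relation $\theta(e^{-2u}) = e^{u}\theta(e^{2u})$ gives $g(-u) = g(u)$, and $\partial_u^2 - \tfrac14$ preserves parity. For (3) the $n=1$ term of (\ref{eq421}) dominates at large $u$: $\Phi(u) \sim 4\pi^2 e^{9u/2}\exp(-\pi e^{2u}) \ll \exp(-e^u)$, and evenness extends the bound to $u \to -\infty$.

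The analyticity half of (4) is immediate: for $u = x+iy$ with $|y| < \pi/4$ we have $\cos 2y > 0$, so $|\exp(-\pi n^2 e^{2u})| = \exp(-\pi n^2 e^{2x}\cos 2y)$ gives superexponential decay in $n$ and termwise uniform convergence of the series on compact subsets of the open strip. The boundary limit $\Phi^{(m)}(it)\to 0$ as $t\to\pi/4^-$ is the main obstacle. My plan is to use the compact identity from (1) and reduce to showing $\theta^{(k)}(v)\to 0$ as $v = e^{2it} \to i$. Changing variable $\tau = iv$ sends the problem to $\tau \to -1$ from inside the upper half-plane, where I would apply the modular chain
\[
\theta_3(\tau) = \theta_3(\tau+2) = \theta_4(\tau+1) = \sqrt{i/(\tau+1)}\;\theta_2\!\bigl(-1/(\tau+1)\bigr).
\]
Since $\sigma := -1/(\tau+1)$ satisfies $\mathrm{Im}(\sigma)\to +\infty$ and $\theta_2(\sigma) \ll \exp(-\pi\,\mathrm{Im}(\sigma)/4)$ as $\mathrm{Im}(\sigma)\to\infty$, this forces exponential decay of $\theta(v)$; differentiating the modular identities propagates the same decay to each $\theta^{(k)}(v)$, and hence to each $\Phi^{(m)}(it)$.

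For (5) I would write $\Phi'(u) = 2 v^{5/4}K(v)$ with $v = e^{2u}$, where
\[
K(v) = \tfrac{15}{4}\theta'(v) + \tfrac{15}{2}v\theta''(v) + 2v^2\theta'''(v) = -\sum_{n\ne 0}\pi n^2\,Q(v,n)\,e^{-\pi n^2 v},
\]
and $Q(v,n) = 2\pi^2 v^2 n^4 - \tfrac{15}{2}\pi v n^2 + \tfrac{15}{4}$ is quadratic in $w = \pi v n^2$ with roots $w_\pm = (15\pm\sqrt{105})/8$. I would check that $Q(v,n) > 0$ whenever $|n|\ge 2$ with $v\ge 1$, or $|n|=1$ with $v \ge w_+/\pi \approx 1.005$, so termwise $K(v) < 0$ on $[1.005,\infty)$. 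On the short interval $[1,1.005]$, I would combine three facts: $K(1) = 0$ (forced by $\Phi'(0) = 0$, which follows from evenness), a direct verification that $K'(1) < 0$ (the negative $n=1$ contribution dominates the $n\ge 2$ tail by a factor of order $e^{-3\pi}$), and a crude bound on $|K''|$ over $[1,1.005]$ to rule out a sign change. Together these give $K(v) < 0$ for all $v > 1$, hence $\Phi'(u) < 0$ on $(0,\infty)$, establishing (5).
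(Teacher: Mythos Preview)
Your treatment of the Fourier representation and properties (1)--(3) matches the paper's proof essentially line for line: Riemann's integral formula, the substitution $x=e^{2u}$, the compact identity $\Phi(u) = 3e^{5u/2}\theta'(e^{2u}) + 2e^{9u/2}\theta''(e^{2u})$, evenness from the theta functional equation, and decay from the $n=1$ term plus evenness.

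Where you diverge is in (4) and (5). The paper does not prove either: it attributes the boundary limits in (4) to P\'{o}lya \cite{Pol26a} (and explicitly notes that (4) is never used later), and it attributes the strict decrease in (5) to Wintner \cite{Wi35}. You instead supply genuine arguments. Your modular-chain proof of the boundary limit in (4) is correct: the path $u=it$, $t\to\pi/4^-$, sends $\tau=ie^{2it}$ to $-1$ inside the upper half-plane, and the identity $\theta_3(\tau)=\sqrt{i/(\tau+1)}\,\theta_2(-1/(\tau+1))$ with $\mathrm{Im}(-1/(\tau+1))\to+\infty$ forces exponential decay that survives differentiation. Your proof of (5) via $\Phi'(u)=2v^{5/4}K(v)$ and the quadratic $Q(v,n)=2w^2-\tfrac{15}{2}w+\tfrac{15}{4}$ in $w=\pi v n^2$ is also sound; the delicate point is that $\pi<w_+=(15+\sqrt{105})/8\approx 3.156$, so the $n=1$ term does not cooperate on $[1,1.005]$, and your local argument via $K(1)=0$, $K'(1)<0$, and a second-derivative bound correctly handles this gap. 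What the paper's citation-based approach buys is brevity; what yours buys is self-containment, and in the case of (5) your argument is close in spirit to Wintner's original one-page note.
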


\begin{proof}
We start from Riemann's formula (\cite{Rie1859})
$$
 \xi(\frac{1}{2}+ it) = 
 4 \int_{1}^{\infty} \frac{d[ x^{3/2} (\psi^{'}(x)]}{dx} x^{-\frac{1}{4}} \cos (\frac{t}{2} \log x) dx,
$$
in which $\psi(x) = \frac{1}{2}( \theta(x)-1)$, so that $\psi'(x) = \frac{1}{2} \theta'(x)$,
and make the variable change
 $x=e^{2u}$ to obtain (\ref{eq420}), cf.  Edwards \cite[Sec. 1.8]{Ed74},
or Titchmarsh \cite[Sec. 10.1]{Tit86}.
 The expansion of the theta function yields
\begin{equation}\label{eq423}
\Phi(u) = [ 3 x^{\frac{5}{4}} \tht^{'}(x) + 2 x^{\frac{9}{4}} \tht^{''}(x)]\mid_{x= e^{2u}}.
\end{equation}
We now consider properties of $\Phi(u).$

(1) This formula was noted by P\'{o}lya \cite{Pol26a} in 1926. It can be directly verified by
 comparison of the right side with (\ref{eq423}).

(2) The functional equation $\tht(x) = \sqrt{\frac{1}{x}} \tht(\frac{1}{x})$
yields $e^{\frac{u}{2}} \tht( e^{2u}) = e^{-\frac{u}{2}} \tht(e^{-2u}).$  Substituting this
in (1) yields $\Phi(u)= \Phi(-u)$.

(3) For $u \to +\infty$ this follows by inspection of (\ref{eq421}). For $u \to -\infty$ it follows using (2).

(4) The theta function $\tht(x)$ defines an analytic function on the right half plane ${\rm Re}(x) >0$.
Under the change of variable $x=e^{2u}$, this region corresponds to the strip $|{\rm Im}(u)| < \frac{\pi}{4}.$
The limiting values were noted by P\'{o}lya \cite{Pol26a}. (Fact (4) is not used in this paper.)

(5) The decreasing property of $\Phi(u)$ was proved in 1935 by Wintner \cite{Wi35}.
Note that $\Phi(0) =\sum_{n=1}^{\infty} (4\pi^2 n^4 -6\pi n^2) e^{-\pi n^2} \approx 0.89339$.
\end{proof}

We next give basic properties of the family of functions $\Xi_{\lambda}^{(-1)}(z)$.
%
\begin{lem}\label{le42} 
For real $\lambda$ the  functions $\Xi^{(-1)}_{\lambda}(z)$  have the  following properties.
\begin{enumerate}
\item
Each function $\Xi^{(-1)}_{\lambda}(z)$
is an entire function of $z$ of order $1$ and maximal type.
\item
Each function  is real on the real axis and is an odd function, i.e.
$$
\Xi_{\lambda}^{(-1)}(z)= - \, \Xi_{\lambda}^{(-1)}(-z).
$$
Thus it is pure imaginary on the
imaginary axis $z=it$.
\item
One has  $\frac{d}{dz} \Xi_{\lambda}^{(-1)}( z) = \Xi_{\lambda}(z).$
Thus, for $\lambda=0$,
$$
\Xi_{0}^{(-1)}(z) = -i \, \xi^{(-1)}(\frac{1}{2} + iz) = -i  \int_{1/2}^{1/2+ iz} \xi(w) dw.
$$
\end{enumerate}
\end{lem}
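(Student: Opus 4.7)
The plan is to derive all three properties directly from the defining integral
\[
\Xi^{(-1)}_\lambda(z) = 2\int_0^\infty e^{\lambda u^2}\Phi(u)\,\frac{\sin zu}{u}\,du,
\]
exploiting the super-exponential decay $\Phi(u)\ll\exp(-e^{|u|})$ from Lemma~\ref{le41}(3), which dominates the Gaussian factor $e^{\lambda u^2}$ for every fixed real $\lambda$.

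First, for (2) I would simply note that $\Phi(u)$, $e^{\lambda u^2}$, and $(\sin zu)/u$ are real whenever $z$ is real, so $\Xi^{(-1)}_\lambda$ is real on $\R$; and $(\sin(-zu))/u = -(\sin zu)/u$ gives oddness. For (3) I would justify differentiation under the integral sign by the absolute convergence of $\int_0^\infty e^{\lambda u^2}|\Phi(u)|\,du$ together with the uniform bound $|\cos zu|\le e^{|z|u}$ on compact $z$-sets; this yields $\frac{d}{dz}\Xi^{(-1)}_\lambda(z)=\Xi_\lambda(z)$ from (\ref{eq111}), and specializing $\lambda=0$ compares $2\int_0^\infty\Phi(u)(\sin zu)/u\,du$ with (\ref{eq104}) and the definition (\ref{eq102}) of $\xi^{(-1)}(s)$, giving $\Xi^{(-1)}_0(z)=-i\,\xi^{(-1)}(\tfrac12+iz)$.

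For (1), to show $\Xi^{(-1)}_\lambda$ is entire, I would expand $(\sin zu)/u=\sum_{k\ge 0}(-1)^k z^{2k+1}u^{2k}/(2k+1)!$ and interchange sum and integral; absolute convergence of $\int_0^\infty e^{\lambda u^2}|\Phi(u)|u^{2k}\,du$ follows from the super-exponential decay of $\Phi$, producing a power series in $z$ with infinite radius of convergence. To get order exactly $1$, the upper bound $|\Xi^{(-1)}_\lambda(z)|\le 2\int_0^\infty e^{\lambda u^2}|\Phi(u)|\frac{\sinh(|z|u)}{u}\,du$, combined with a saddle-point estimate on the integrand around $u\asymp\log|z|$ where $e^{|z|u}$ and $e^{-e^u}$ balance, gives a bound of shape $\exp(C|z|\log|z|)$, which is of order $1$ and of maximal type.

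The main obstacle is the maximal-type claim, since it needs a matching lower bound. For this I would restrict to the imaginary axis $z=it$ with $t>0$, where $(\sin(itu))/u = i(\sinh tu)/u$ is positive and the integrand is non-negative once $u$ is large enough that $\Phi(u)$ is positive (which holds for all $u\ge 0$ by Lemma~\ref{le41}(5)). Applying Laplace's method to $\int_0^\infty\exp(\lambda u^2+tu-e^u+\text{lower order})\,du$ localizes the mass at $u=\log t+o(1)$ and produces a lower bound $|\Xi^{(-1)}_\lambda(it)|\gg\exp(ct\log t)$ for some $c>0$ and all sufficiently large $t$; since $\exp(ct\log t)/\exp(\tau t)\to\infty$ for every $\tau$, this establishes maximal type. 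An alternative, if one prefers to avoid asymptotic analysis for general $\lambda$, is to establish the statement for $\lambda=0$ via the known order-$1$ maximal-type property of $\xi(s)$ (which transfers to $\xi^{(-1)}(s)$ since antidifferentiation along a vertical line preserves order and type) and then to show that the $\lambda$-dependent factor $e^{\lambda u^2}$ cannot reduce the type, by comparing $\Xi^{(-1)}_\lambda$ to $\Xi^{(-1)}_0$ through the heat-type convolution identity that arises from writing $e^{\lambda u^2}$ as a Gaussian multiplier on the Fourier side.
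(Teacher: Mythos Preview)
Your arguments for (2) and (3) match the paper's essentially verbatim: reality and oddness read off from the integrand, and differentiation under the integral sign justified by the super-exponential decay of $\Phi$.

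For (1) your route is genuinely different from the paper's. The paper disposes of the order and type claims in two lines: it cites a result of P\'{o}lya on Fourier integrals with rapidly decaying kernels to get order~$1$, and then invokes the Paley--Wiener theorem to exclude finite type (if $\Xi^{(-1)}_\lambda$ were of exponential type $\tau<\infty$, its Fourier representation would force $e^{\lambda u^2}\Phi(u)/u$ to vanish for $|u|>\tau$, which it does not). Your approach instead carries out a direct saddle-point/Laplace analysis on both sides: an upper bound on the whole plane via $|\sin(zu)/u|\le \sinh(|z|u)/u$, and a lower bound on the imaginary axis using positivity of $\Phi$ and of $\sinh(tu)/u$. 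This is correct in outline and has the virtue of being self-contained, but it is considerably more work, and a few details are loose: the dominant decay of $\Phi(u)$ is $e^{-\pi e^{2u}}$ rather than $e^{-e^u}$, so the stationary point sits near $u\approx\tfrac12\log t$ rather than $\log t$ (the conclusion $\exp(c\,t\log t)$ is unaffected). Your fallback suggestion of transferring the $\lambda=0$ case to general $\lambda$ via the Gaussian heat semigroup is heavier than necessary; the Paley--Wiener argument handles all $\lambda$ uniformly in one stroke.
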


\begin{proof}
(1) The integral representation  $\Xi^{(-1)}_{\lambda}(z)=2\int_{0}^{\infty} e^{\lambda u^2} \Phi(u) \left( \frac{\sin zu}{u} \right) du$ shows that it is an
entire function of $z$. The results of P\'{o}lya \cite[pp. 9-10]{Pol27a} imply
it is of order $1$.
 Viewing the representation as a Fourier integral,
the  Paley-Wiener theorem shows it cannot have growth of order $1$ and finite type,
whence it has maximal type.

(2) The integral representation  shows that $\Xi_{\lambda}^{(-1)}(z)$ is
real on the real axis, and it is clearly an odd function of $z$. 
Since
$\sin (izu) = -i\sinh(zu)$ we conclude this function is pure imaginary on the imaginary axis.

(3) The rapid decay of $\Phi(u)$ as $|u| \to \infty$ permits differentiation under the integral sign.
Now the identity $\Xi_{0}(z)= \xi(\frac{1}{2} + iz)$  and the fact that 
 $\Xi_{\lambda}^{(-1)}(0)=0$ by (2) yield the last equation.
 (Note that $ds=i \,dz$.)
\end{proof}


Lemma \ref{le42}(2) above implies that the zeros of $\Xi_{\lambda}^{(-1)}(z)$
necessarily have a four-fold symmetry about the
real and imaginary axes: If $\rho$ is a zero of any $\Xi_{\lambda}^{(-1)}(z)$,
then so are $\bar{\rho}$, $-\rho$ and $-\bar{\rho}$. 
Similarly  $\xi_{\lambda}^{(-1)}(s) = i \Xi_{\lambda}^{(-1)}(i(\frac{1}{2}-s))$
necessarily has zeros obeying the same four-fold symmetry as
those of the $\xi$-function: If $\rho$ is a zero of any $\xi_{\lambda}^{(-1)}(s)$,
then so are $\bar{\rho}$, $1-\rho$ and $1-\bar{\rho}$. 
\medskip

To prove Theorem \ref{th22} we will use  estimates
on the size of $\xi(s)$,
 derived using the factorization (\ref{eq100}) for $\xi(s)$. 
To state these it
is convenient to introduce the function $F(\sigma, t)$ of two real variables defined
 for $\sigma \ge 0$, $t >0$ by
 \begin{equation}\label{eq333}
F(\sigma, t) :=
\sqrt{\pi}
(2\pi e)^{-\frac{\sigma}{2} }\ (\sigma^2 +t^2)^{\frac{\sigma +3}{4}} \exp \left(-\frac{t}{2} \arctan (\frac{t}{\sigma} ) \right).
\end{equation}
We have the following estimates.


\begin{lem}\label{le43}
There are positive constants $C_{1}, C_{2}, C_{3}$ with the following properties.

(1) For $\frac{1}{2} \le Re(s) \le 2$, the
 function $\xi(s)$ satisfies
\begin{equation}\label{eq331}
|\xi(s)| \le C_{1}e^{-\frac{\pi}{4} |t|} (|t| +1)^{5/2}.
\end{equation}

(2)  For  $s = \sigma+ it$ with $\sigma \ge 2$ and
all real $t$, there holds
\begin{equation}\label{eq332}
F(\sigma, t)\left(1 -  C_{2}(\frac{1}{|\sigma+it|}+ 2^{-\sigma}) \right) \le |\xi(s)| \le
 F(\sigma, t)\left(1 + C_{3}(\frac{1}{|\sigma+it|} + 2^{-\sigma}) \right),
\end{equation}
with $F(\sigma, t)$ given by (\ref{eq333}).

(3) For each $\delta > 0$, there is a positive constant $C = C(\delta)$ such that for 
all $\sigma_0 > C$ and all real $t$, the function $\xi(\sigma + it)$ satisfies
$$
\int_{\sigma_0}^{\sigma_0+2} |\frac{d \arg \xi(\sigma + it)}{d\sigma} |d \sigma \leq 
\frac{1+\pi}{2} + \delta.
$$

\end{lem}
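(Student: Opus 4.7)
The plan is to derive all three parts from the Riemann factorization $\xi(s) = \frac{1}{2}s(s-1)\pi^{-s/2}\Gamma(s/2)\zeta(s)$, combined with Stirling's asymptotic expansion for $\Gamma$ (and for $\psi = \Gamma'/\Gamma$ in part (3)), plus standard bounds on $\zeta(s)$ in the two regions $\frac{1}{2}\le\sigma\le 2$ and $\sigma\ge 2$.

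For part (1), I would bound each factor separately in the strip $\frac{1}{2}\le\sigma\le 2$: $|\tfrac{1}{2}s(s-1)|\ll (|t|+1)^2$, $|\pi^{-s/2}|$ is bounded, Stirling gives $|\Gamma(s/2)|\ll (|t|+1)^{(\sigma-1)/2}e^{-\pi|t|/4}$, and the Phragm\'en--Lindel\"of convexity bound (interpolating between boundedness at $\sigma=2$ and the classical bound $|\zeta(\tfrac{1}{2}+it)|\ll (|t|+1)^{1/4+\epsilon}$) yields $|\zeta(s)|\ll (|t|+1)^{\max(0,1-\sigma)/2+\epsilon}$. Multiplying these, the overall exponent of $(|t|+1)$ peaks at $5/2$ near $\sigma=2$, where the $\Gamma$-factor contributes $(|t|+1)^{1/2}$ on top of the $(|t|+1)^2$ from the polynomial prefactor. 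The exponential $e^{-\pi|t|/4}$ comes entirely from $\Gamma(s/2)$.

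For part (2), I would apply Stirling with controlled error, namely $|\Gamma(z)| = \sqrt{2\pi}\,|z|^{\mathrm{Re}(z)-1/2} e^{-\mathrm{Im}(z)\arg z}\, e^{-\mathrm{Re}(z)}\bigl(1+O(1/|z|)\bigr)$ with $z=s/2$, using $|s/2|=\tfrac{1}{2}\sqrt{\sigma^2+t^2}$ and $\arg(s/2)=\arctan(t/\sigma)$. Combining the constants $\sqrt{2\pi}$, $2^{-(\sigma-1)/2}$, $\pi^{-\sigma/2}$ and $e^{-\sigma/2}$ with $|\tfrac{1}{2}s(s-1)|=\tfrac{1}{2}(\sigma^2+t^2)(1+O(1/|s|))$ recovers $F(\sigma,t)$ exactly. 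The factor $\zeta(s)=1+O(2^{-\sigma})$ for $\sigma\ge 2$ (from the Dirichlet series) introduces the second source of error, and merging both gives $|\xi(s)|=F(\sigma,t)(1+O(1/|s|+2^{-\sigma}))$, which unpacks into the two claimed one-sided inequalities.

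For part (3), I would take the logarithmic derivative,
\[
\frac{\xi'(s)}{\xi(s)} \;=\; \frac{1}{s}+\frac{1}{s-1}-\frac{1}{2}\log\pi+\frac{1}{2}\psi(s/2)+\frac{\zeta'(s)}{\zeta(s)},
\]
and use $\tfrac{d}{d\sigma}\arg\xi=\mathrm{Im}(\xi'/\xi)$. Stirling for the digamma gives $\mathrm{Im}\,\psi(s/2)=\arctan(t/\sigma)+O(1/|s|)$, and the trivial bound $|\arctan(t/\sigma)|\le\pi/2$ contributes $\int\tfrac{1}{2}|\arctan(t/\sigma)|\,d\sigma\le\pi/2$. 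The pole terms integrate as $\int_{\sigma_0}^{\sigma_0+2}|\mathrm{Im}(1/s)|\,d\sigma=|\arctan((\sigma_0+2)/t)-\arctan(\sigma_0/t)|$, which is $O(1/\sigma_0)$ uniformly in $t$; the analogous bound holds for $1/(s-1)$. Finally, $|\zeta'(s)/\zeta(s)|\le -\zeta'(\sigma_0)/\zeta(\sigma_0)=O(2^{-\sigma_0})$ for $\sigma_0\ge 2$. Choosing $\sigma_0$ larger than some $C(\delta)$ makes all but the $\arctan(t/\sigma)$ term contribute less than $\delta$ in total, producing the stated bound.

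The main obstacle is the careful bookkeeping in part (3): the integrated contributions of $\mathrm{Im}(1/s)$ and $\mathrm{Im}(1/(s-1))$ are uniformly small in $t$ but only because one recognizes them as telescoping arctangent differences rather than trying to bound pointwise by $|t|/|s|^2$. Getting a clean uniform-in-$t$ estimate hinges on that observation, after which choosing $\sigma_0=C(\delta)$ sufficiently large to absorb every remainder into $\delta$ is straightforward.
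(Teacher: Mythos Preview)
Your approach is essentially the same as the paper's: factor $\xi(s)$, apply Stirling to the gamma factor, and use the convexity bound (part (1)) or the Dirichlet-series bound $\zeta(s)=1+O(2^{-\sigma})$ (parts (2),(3)) for the zeta factor. The one notable difference is in part (3): the paper differentiates the Stirling expansion of $\arg\Gamma(s/2)$ directly and keeps an extra term $\frac{\sigma t}{2(\sigma^2+t^2)}$, which it then bounds by $1/4$ via AM--GM to reach the constant $(1+\pi)/2$; your route through $\tfrac{1}{2}\,\mathrm{Im}\,\psi(s/2)=\tfrac{1}{2}\arctan(t/\sigma)+O(1/|s|)$ absorbs that term into the $O(1/|s|)$ remainder (since $\sigma t/(\sigma^2+t^2)\le 1/|s|$ is false in general, but after the product-rule cancellation the surviving term is $t/(2(\sigma^2+t^2))=O(1/|s|)$), yielding the sharper bound $\pi/2+\delta$, which the paper itself notes in the remark following the lemma. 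Either way the stated inequality follows.
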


\begin{proof}

(1)
 For $\frac{1}{2} \le Re(s) \le 2$, and $|t| \ge 2$, we have the estimates
$$
|\frac{1}{2}s(s-1)| = O(|t|^2),
$$ 
$$
|\pi^{-s/2}| = O(1),
$$
and
$$
|\Gamma(s/2)| = O(e^{-\frac{\pi |t|}{4}}).
$$
The convexity bound $|\zeta(s)| \leq C(\epsilon) |t|^{\frac{1}{2} - \frac{1}{2} \sigma+\epsilon}$
 (\cite[Chap. V]{Tit86}), valid uniformly for $|t| \ge 2$,
 yields for all $\sigma \ge \frac{1}{2}$ and $|t| \ge 2$,
$$
|\zeta(s)| \leq C |t|^{\frac{1}{2}}.
$$
Combining all these estimates, we easily obtain, for $\frac{1}{2} \le \sigma \le 2$
and all real $t$,
$$|\xi(s)| = O(e^{-\pi t/4}(|t|+1)^{\frac{5}{2}}).$$

(2) 
By definition, $\xi(s) = \foh s(s-1) \pi^{-s/2}\Gamma(s/2)\zeta(s)$. 
Suppose $\sigma \ge 2$ and $t$ is arbitrary.
On this domain 
$$
|s(s-1)| = |\sigma+ it|^2(1+ O(\frac{1}{|\sigma+ it|})).
$$
and
$$
|\zeta(s)| = 1+ O (|2^{-s}|) = 1+ O( 2^{-\sigma}) 
$$
and $|\pi^{-\frac{s}{2}}| = e^{-\frac{\sigma}{2} \log \pi}$.
Now Stirling's formula gives, for $Re(s) \ge \frac{1}{2}$, 
\begin{eqnarray*}
|\Gamma(\frac{s}{2})|  &=& \exp \left( Re\left( \frac{s}{2} \log \frac{s}{2} - \frac{s}{2} + 
\frac{1}{2} \log (\frac{ 4 \pi}{s}) 
+ O (\frac{1}{s})\right) \right)\\
&=& \exp \left( \frac{\sigma}{2} \log |\frac{s}{2}| - \frac{t}{2}\arctan(\frac{t}{\sigma})-\frac{\sigma}{2}+\frac{1}{2}\log 4\pi -\frac{1}{2} \log |s| 
+ O (\frac{1}{|s|}) \right)
\end{eqnarray*}
Combining all of the above estimates, we obtain
\begin{eqnarray*}
|\xi(s)| & = & |\foh s(s-1) \pi^{-s/2}\Gamma(s/2)\zeta(s)| \\
& = & \foh |\sigma+ it|^2\left(1+ O(\frac{1}{|\sigma+ it|})\right)\left(1+ O( 2^{-\sigma}) \right)e^{-\frac{\sigma}{2} \log \pi} \cdot \\
&    & \quad \cdot \exp \left( \frac{\sigma}{2} \log |\frac{\sigma+ it}{2}| - \frac{t}{2}\arctan(\frac{t}{\sigma})-\frac{\sigma}{2}+\frac{1}{2}\log 4\pi -\frac{1}{2} \log |s| 
+O (\frac{1}{|s|}) \right) \\ 
& = & \sqrt{\pi} |\sigma + it|^{\frac{3}{2}}e^{-\frac{\sigma}{2} \log (2\pi e)} \exp\left(
\frac{\sigma}{4} \log(|\sigma+it|^2)
- \frac{t}{2}\arctan(\frac{t}{\sigma}) \right)
 \cdot \\
&    ~&
\quad  \cdot \left(1+O(\frac{1}{|\sigma+it|} + \frac{1}{2^\sigma})\right)\\
&=&
F(\sigma, t) \left(1+O(\frac{1}{|\sigma+it|} + \frac{1}{2^\sigma})\right).
\end{eqnarray*}

(3) We derive estimates related to $\arg(\xi(s))$, taken to be $0$ on the real axis for 
$\sigma > 1$.  Then, for $\sigma > 1,$ 
\begin{eqnarray*}
\arg \xi(s) & = & \arg(s) + \arg(s-1) + \arg(\pi^{-s/2}) + \arg(\zeta(s)) + \arg(\Gamma(s/2)) \\
& = & \arctan(\frac{t}{\sigma}) + \arctan(\frac{t}{\sigma-1}) - \frac{t\log \pi}{2} + \arg(\zeta(s)) + \arg(\Gamma(s/2)). 
\end{eqnarray*}

First, note that $$\frac{d \arg \zeta(\sigma+it)}{d\sigma} = \frac{d\ \text{Im}( \log \zeta(\sigma+it))}{d \sigma} = \text{Im} \left(\frac{\zeta'}{\zeta}(\sigma+it)\right),$$ and since we have uniformly in $t$ that
$$\lim_{\sigma \rightarrow \infty} \zeta(\sigma+it) = 1, \ \ \ \ \ \lim_{\sigma \rightarrow \infty} \zeta'(\sigma+it) = 0,$$
we can choose $C$ sufficiently large so that $\sigma_0 > C$ implies that $|\frac{d \arg \zeta(\sigma+it)}{d\sigma}| < \delta/4$. If we also choose $C$ large enough so that $|\arctan(\frac{t}{\sigma_0+2})-\arctan(\frac{t}{\sigma_0- 1})| < \delta/4$, then

\begin{eqnarray*}
\int_{\sigma_0}^{\sigma_0+2} |\frac{d \arg \xi(\sigma+it)}{d\sigma}| d \sigma & \leq &
 2\mid \arctan(\frac{t}{\sigma_0+2})-\arctan(\frac{t}{\sigma_0 -1})\mid \\ 
     & & + \int_{\sigma_0}^{\sigma_0+2} |\frac{d \arg \zeta(\sigma+it)}{d\sigma}| d \sigma 
      + \int_{\sigma_0}^{\sigma_0+2} |\frac{d \arg \Gamma(\frac{\sigma+it}{2})}{d\sigma}| d \sigma \\
   & \le & \delta + \int_{\sigma_0}^{\sigma_0+2} |\frac{d \arg \Gamma(\frac{\sigma+it}{2})}{d\sigma}| d \sigma.
\end{eqnarray*}
By Stirling's formula, we have
$$
\arg(\Gamma(\frac{s}{2})) = \frac{t}{2}\log|\frac{\sigma + it}{2}| + \frac{\sigma}{2}\arctan(\frac{t}{\sigma}) - \frac{t}{2} 
-\frac{1}{2} \arctan(t/\sigma)
+ O(\frac{1}{|\sigma+it|}).
$$
Therefore, for $\sigma >1$, 
\bea
\frac{d \arg \Gamma(\frac{\sigma + it}{2})}{d\sigma} & = &
 \frac{d}{d\sigma} \left(\frac{t}{4}\log (\frac{\sigma^2 + t^2}{4}) + 
 \frac{\sigma-1}{2}\arctan(\frac{t}{\sigma}) + O(\frac{1}{|\sigma+it|})\right) \nonumber \\
& = & 
\frac{t}{4} \left(\frac{2\sigma}{\sigma^2+t^2} \right) + \foh \arctan(\frac{t}{\sigma}) + 
O(\frac{1}{|\sigma+it|}). \label{ln:terms}
\eea
As $\sigma^2+ t^2 \ge 2 \sigma t$, we can bound the first term of (\ref{ln:terms}) by $1/4$, and the second term by 
$\pi/4$, giving
$$
|\frac{d \arg \Gamma(\frac{\sigma + it}{2})}{d\sigma}| \le \frac{1+\pi}{4}+
 O\left(\frac{1}{|\sigma+it|}\right).
$$

Thus, for any $\delta > 0$, if $C = C(\delta)$ is chosen large enough, then for all $\sigma_0 > C$, 
$$
\int_{\sigma_0}^{\sigma_0+2} |\frac{d \arg \xi(\sigma + it)}{d\sigma}| d \sigma \leq \frac{1+\pi}{2} + \delta.
$$

\end{proof}

In a region where  $\sigma/t \to 0$, the parameter range relevant to this paper,
the  first term on the right in
(\ref{ln:terms}) goes to zero, and the upper bound in Lemma \ref{le43}\,(3) above
can be further improved to $\frac{\pi}{2} + \delta$. 
This latter bound cannot be improved, since when $s=\sigma+it$ has $\sigma$ much smaller than $t$
 the argument must necessarily change by nearly $\pi/2$;
this variation comes from the
change in argument of the factor $\Gamma(\frac{s}{2}) $ by $\arg(\frac{s_0}{2})$
 between $s= s_0$ and $s_0 + 2$. \medskip

For later use we  collect some properties of the function $F(\sigma, t)$ 
defined in (\ref{eq333}) above.

\begin{lem}\label{le44}
On the region $\sigma \ge 0$  the
function $F(\sigma, t) $ has the following properties.

 (1) For fixed $t \ge  2\pi e$, the  function $F(\sigma, t)$ is 
 a strictly increasing function of $\sigma$.
 
 (2) For fixed $t \ge 4\pi e$, and any positive $\sigma_1$,
 \begin{equation}\label{eq444}
 \int_{0}^{\sigma_1} F(\sigma, t) \, d \sigma \le 4 F(\sigma_1, t).
 \end{equation}
\end{lem}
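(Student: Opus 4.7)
My plan is to reduce both parts to a single lower bound on the logarithmic derivative $\partial_\sigma \log F(\sigma, t)$. Setting $L(\sigma, t) := \log F(\sigma, t)$, I would compute
\[
\partial_\sigma L(\sigma, t) \;=\; -\frac{1}{2}\log(2\pi e) + \frac{1}{4}\log(\sigma^2+t^2) + \frac{\sigma(\sigma+3)}{2(\sigma^2+t^2)} + \frac{t^2}{2(\sigma^2+t^2)},
\]
where the last term, $t^2/(2(\sigma^2+t^2))$, arises from differentiating $-\frac{t}{2}\arctan(t/\sigma)$ via the identity $\partial_\sigma \arctan(t/\sigma) = -t/(\sigma^2+t^2)$. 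The two rational terms combine to $\frac{1}{2} + \frac{3\sigma}{2(\sigma^2+t^2)} \ge \frac{1}{2}$, so everything reduces to controlling the remaining logarithmic piece $-\frac{1}{2}\log(2\pi e) + \frac{1}{4}\log(\sigma^2+t^2)$ using the hypothesis on $t$.

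For (1), the hypothesis $t \ge 2\pi e$ immediately yields $\sigma^2+t^2 \ge (2\pi e)^2$ and hence $\frac{1}{4}\log(\sigma^2+t^2) \ge \frac{1}{2}\log(2\pi e)$, giving $\partial_\sigma L(\sigma, t) \ge \frac{1}{2} > 0$ for all $\sigma \ge 0$; strict monotonicity of $F(\sigma, t)$ in $\sigma$ follows.

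For (2), I would use this same bound $\partial_\sigma L \ge \frac{1}{2}$, which certainly holds \emph{a fortiori} under the stronger hypothesis $t \ge 4\pi e$, to derive by integration over $[\sigma,\sigma_1]$ the exponential comparison $F(\sigma, t) \le F(\sigma_1, t)\, e^{-(\sigma_1-\sigma)/2}$ on $[0,\sigma_1]$. Integrating in $\sigma$ then gives $\int_0^{\sigma_1} F(\sigma, t)\, d\sigma \le 2 F(\sigma_1, t)(1 - e^{-\sigma_1/2}) \le 2 F(\sigma_1, t)$, which is well within the claimed factor of $4$. I do not expect a serious obstacle: the only step requiring any care is the algebraic simplification of the rational terms in $\partial_\sigma L$, where the contribution from the $\arctan$ factor conspires with that from the exponent $(\sigma+3)/4$ to produce the clean constant $\frac{1}{2}$, after which the hypotheses on $t$ control the single remaining log term by inspection.
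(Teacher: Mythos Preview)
Your proof is correct, and it takes a genuinely different route from the paper's. The paper proves (1) by rewriting $F$ (using $\arctan(t/\sigma)=\frac{\pi}{2}-\arctan(\sigma/t)$) as a product of three factors,
\[
\sqrt{\pi}(\sigma^2+t^2)^{3/4}\cdot
\exp\!\Big(\tfrac{\sigma}{4}\big(\log(\sigma^2+t^2)-\log(4\pi^2 e^2)\big)\Big)\cdot
\exp\!\Big(-\tfrac{\pi t}{4}+\tfrac{t}{2}\arctan(\sigma/t)\Big),
\]
and observes that for $t\ge 2\pi e$ each factor is separately nondecreasing in $\sigma$. For (2) the paper then bounds all factors except the middle exponential by their values at $\sigma_1$, integrates that exponential exactly, and uses $t\ge 4\pi e$ to force the resulting coefficient $4/\log\big(\tfrac{\sigma_1^2+t^2}{4\pi^2 e^2}\big)$ to be at most $4$.

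Your approach is more unified: a single computation of $\partial_\sigma \log F$ (with the nice cancellation giving the constant $\tfrac12$) handles both parts at once. It also yields more: your exponential comparison $F(\sigma,t)\le F(\sigma_1,t)e^{-(\sigma_1-\sigma)/2}$ gives the integral bound with constant $2$ rather than $4$, and in fact shows that (2) already holds under the weaker hypothesis $t\ge 2\pi e$. The paper's factorization, on the other hand, is slightly more transparent about \emph{why} each piece is monotone, and it is reused later in the paper (e.g.\ in the ratio estimate (5.9)), so there is some economy in setting it up explicitly.
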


\begin{proof}
(1) Rewrite
\begin{equation} \label{eq445}
F(\sigma, t)= \sqrt{\pi}(\sigma^2+t^2)^{\frac{3}{4}} \exp \left( \frac{\sigma}{4} \left(\log ( \sigma^2+ t^2) - \log (4\pi^2 e^2) \right)\right)
\exp\left( -\frac{\pi t}{4} + \frac{t}{2} \arctan(\frac{\sigma}{t})\right).
\end{equation}
For fixed 
$t \ge  2\pi e$ all terms separately in this product are constant or increasing functions
of $\sigma$.

(2) Since 
$t \ge  2\pi e$, by (1) the integrand on the left side of  (\ref{eq444}) is increasing. 
We obtain for $t \ge 4 \pi e$,
\begin{eqnarray*}
\int_{0}^{\sigma_1} F(\sigma, t) d \sigma 
& \le &  \sqrt{\pi} (\sigma_1^2 + t^2)^{\frac{3}{4}}
e^{-\frac{\pi t}{4}+ \frac{t}{2} \arctan (\frac{\sigma_1}{t})}
\int_{0}^{\sigma_1} \exp \left(\frac{\sigma}{4} \log (\frac{\sigma_1^2 + t^2}{4\pi^2 e^2})  \right) d\sigma\\
&\le& \frac{4}{\log (\frac{\sigma_1^2 + t^2}{ 4\pi^2e^2})}  \,  F(\sigma_1, t)\le 4 \, F(\sigma_1, t).
\end{eqnarray*}
\end{proof}

\section{Integrals of the  $\xi$-Function: Proof of Theorem~\ref{th21}}

We consider 
 the family of functions $\Xi_{\lambda}^{(-1)}(z)$ given by (\ref{eq122}),
which has $\Xi_{0}^{(-1)}(z)=-i \xi^{(-1)}(\frac{1}{2} + iz)$.

(1) To  show $\lim_{t \to \infty} \Xi_{\lambda}^{(-1)}(t) = \pi \Phi(0)$ we 
will establish the stronger  result that for $t \ge 3$ one has 
\begin{equation}\label{eq451a}
\Xi_{\lambda}^{(-1)}(t) = \pi \Phi(0) + O \left(\frac{1}{t^{2/3}}\right),
\end{equation}
where the implied constant in the $O$-symbol depends on $\lambda$.
We start from
\begin{eqnarray*}
\Xi_{\lambda}^{(-1)}(t) &= &2\int_{0}^{\infty} e^{\lambda u^2} \Phi(u) \frac{\sin tu}{u} du \\
&=& 2\int_{0}^{\infty} 
e^{\lambda (\frac{v}{t})^2} \Phi(\frac{v}{t})\, \frac{\sin v}{v} dv\\
\end{eqnarray*}
We estimate  the latter integral by splitting
the integration region into  three pieces, the first integrating over 
the interval $[0, 2 \pi \lfloor t^{2/3}\rfloor ]$,
the second integrating over the interval $[2\pi \lfloor t^{2/3} \rfloor, 2 \pi \lfloor t^{4/3}\rfloor ]$, and the
third integrating over  $[2\pi \lfloor t^{4/3} \rfloor, +\infty)$. The first integral will give the main
contribution $\frac{\pi}{2} \Phi(0) + O(\frac{1}{t^{2/3}})$, the second will be bounded
by $O(\frac{1}{t^{2/3}})$, and the third will be shown negligibly small, of size $O(e^{-2t})$.

To obtain the estimates, we view 
$\lambda$ as  fixed and let   $F(u) = e^{\lambda u^2} \Phi(u)$.
In the following estimates, all  $O$-symbols will depend on $\lambda$ unless otherwise noted.
Now $|F(u)|,\, |F^{'}(u)|, \, |F^{''}(u)|$ are all absolutely bounded
on $[0, \infty)$, using the very rapid decrease of $\Phi(u)$ and its first two 
derivatives; this follows from results in Lemma \ref{le41}.
Next,  Lemma ~\ref{le41} (2) shows $F(u)$ is an even function, whence
its power series expansion at $u=0$ gives, for $0 \le u \le 2\pi$,
\begin{equation}\label{eq452a}
F(u) = \Phi(0) + O\left( \Phi^{''}(0) u^2\right).
\end{equation}
For any $u_0 \ge 0$ and $0 \le x \le 2\pi$  we have
\begin{equation}\label{eq453a}
F(u_0+x) = F(u_0) + F'(u_0)x + O \left( |F''(u_0)| x^2\right),
\end{equation}
with the $O$-constant depending on $\lambda$ but not on $u_0$.

For the first integral, on the range $v \in [0, 2 \pi \lfloor t^{2/3}  \rfloor]$, (\ref{eq452a}) gives
$$
F\left(\frac{v}{t}\right) = \Phi(0) + O\left((\frac{v}{t})^2\right).
$$
We obtain
\begin{eqnarray*}
\int_{0}^{2 \pi \lfloor t ^{2/3}\rfloor} F\left(\frac{v}{t}\right) \frac{ \sin v}{v} dv 
&= &\Phi(0) \int_{0}^{2 \pi \lfloor t^{2/3} \rfloor} \frac{\sin v}{v} dv
+ O \left( \int_{0}^{2 \pi \lfloor t^{2/3} \rfloor} \frac{v}{t^2}|\sin v| dv\right) \\
& = & \Phi(0) \int_{0}^{2 \pi \lfloor t^{2/3} \rfloor} \frac{\sin v}{v} dv
+ O \left( \frac{1}{t^{2/3}} \right).
\end{eqnarray*}
Next we use the evaluation of the improper integral
$$
\int_{0}^{\infty} \frac{ \sin u}{u} du := \lim_{T \to \infty} \int_{0}^T \frac{\sin u}{u} du = \frac{\pi}{2}.
$$
We use the quantitative estimate that  for  real $T \ge 1$
$$
\int_{0}^{2 \pi T} \frac{\sin u}{u} du= \frac{\pi}{2} + O\left(\frac{1}{T}\right),
$$
which can be proved by integration by parts. Substituting this in the last equation, 
taking $T = 2\pi \lfloor t^{2/3} \rfloor$,
we obtain
$$
\int_{0}^{2 \pi \lfloor t ^{2/3}\rfloor} F\left(\frac{v}{t}\right) \frac{ \sin v}{v} dv =
 \frac{\pi}{2} \Phi(0) + O \left( \frac{1}{t^{2/3}}\right).
$$

For the second integral, we have
\begin{eqnarray*}
\int_{2\pi \lfloor t^{2/3}\rfloor}^{2 \pi \lfloor t^{4/3} \rfloor} F\left(\frac{v}{t}\right) \frac{\sin v}{v} dv
&= & \sum_{n = \lfloor t^{2/3} \rfloor}^{\lfloor t^{4/3} \rfloor-1}
 \int_{2\pi n}^{2\pi(n+1)} 
F\left( \frac{v}{t}\right) \frac{\sin v}{v} dv \\
&=& \sum_{n = \lfloor t^{2/3} \rfloor}^{\lfloor t^{4/3} \rfloor-1}
2 \pi  \int_{0}^{1} F\left( \frac{2\pi (n+x)}{t}\right) \frac{\sin 2\pi x}{2 \pi(n+x)} dx. 
\end{eqnarray*}
For $n \ge 2$ and $0 \le x \le 1$ we have 
$$
\frac{1}{2\pi (n+x)}= \frac{1}{2\pi n}\left( 1 + O\left(\frac{x}{n}\right) \right),
$$
where the $O$-constant is absolute. This yields
$$
\int_{0}^1 F\left(\frac{2\pi(n+x)}{t}\right) \frac{\sin 2 \pi x}{2\pi(n+x)} dx
= \frac{1}{2 \pi n} \int_{0}^1 F\left( \frac{2\pi(n+x)}{t}\right) \sin 2 \pi x \, dx + O\left( \frac{1}{n^2}\right),
$$
where the $O$-constant depends on $\lambda$ but not on $n \ge 2$. 
We now put in the integral on the right the bound, obtained from (\ref{eq453a}), that
for $0 \le x \le 1$, 
$$
F\left(\frac{ 2\pi (n+x)}{t}\right) = F\left(\frac{2 \pi n}{t}\right) + O\left( |\frac{x}{t}| + |\frac{x}{t}|^2\right).
$$
Substituting this in the integral, the constant term $F(\frac{2\pi n}{t})$ integrates to $0$,
and we obtain
$$
\frac{1}{2 \pi n} \int_{0}^1 F\left( \frac{2\pi(n+x)}{t}\right) \sin 2 \pi x dx = O\left( \frac{1}{nt} \right).
$$
We conclude that
$$
\int_{2\pi \lfloor t^{2/3}\rfloor}^{2 \pi \lfloor t^{4/3} \rfloor} F\left(\frac{v}{t}\right) \frac{\sin v}{v} dv
= O \left(  \sum_{n = \lfloor t^{2/3} \rfloor}^{\lfloor t^{4/3} \rfloor} ( \frac{1}{nt} + \frac{1}{n^2}) \right) 
= 
O\left( \frac{1}{t^{2/3}}\right).
$$

For the third integral, we use the rapid decrease of $\Phi(u)= O( e^{- e^u})$ to conclude,
with much to spare, that
$$
\left|\int_{ \lfloor t^{4/3} \rfloor} F\left(\frac{v}{t}\right) \frac{\sin v}{v} dv\right| \le \int_{t^{1/3}}^{\infty} F(u) du \le e^{-2t}.
$$

Combining these three integral estimates, we obtain the desired bound (\ref{eq451a}). \medskip

(2) First, the  fact that  $\lim_{t \to \infty} \Xi_{\lambda}^{(-1)}(t) = A_0  \ne 0$
implies that the function $\Xi_{\lambda}^{(-1)}$ has at most finitely many zeros on the positive $t$
axis. The functional equation $\Xi_{\lambda}^{(-1)}(-t) = - \Xi_{\lambda}^{(-1)}(t)$ gives the result
on the negative $t$ axis as well, and shows $\Xi_{\lambda}^{(-1)}(0)=0.$
Since these functions are entire of order $1$ and maximal type by
Lemma~\ref{le42}(1), they necessarily have infinitely many zeros, whence all
but finitely many are complex zeros.

Secondly, we recall that in  1947 Wintner \cite{Wi47} proved directly that 
\begin{equation}\label{eq454}
\Xi^{(-1)}(t) :=\Xi_{0}^{(-1)}(t) >0 ~~ \mbox{when} ~~t>0;
\end{equation}
this fact implies that $\Xi^{(-1)}(z) = \Xi^{(-1)}_0(z)$ has
no zeros on the positive real axis, and the functional equation gives the same
on the negative real axis.
Here we note in passing that $\Xi^{(-1)}(z)$
has a simple zero at $z=0$, since $\Xi(0) = \xi(\frac{1}{2}) \approx 0.49712 \ne 0$. 
Wintner's proof is based on  the following assertion.

{\bf Claim.} {\em  If a function $\Psi(u)$ is positive and decreasing
on the positive real axis, then for each positive $t$, }
\begin{equation*}
\int_{0}^{\infty} \Psi(u) \left(\frac{ \sin tu}{u} \right) du = \lim_{X \to \infty} \int_{0}^X  \Psi(u)
\left( \frac{ \sin tu}{u} \right)
du >0.
\end{equation*}

To prove the claim, the existence of the limit is seen
by writing $\psi(u) =\frac{1}{u} \Psi(u)$ and noting it 
is positive and decreases to $0$ at $\infty$. On choosing
values  $X=X_n= \frac{n\pi}{t}$ one has
$$
  \int_{0}^{X_n}  \Psi(u) \frac{ \sin tu}{u} du =  
   \frac{1}{t} \sum_{k=1}^{n} \int_{(k-1)\pi}^{k \pi} \psi(\frac{v}{t})\sin v \, dv.
$$
Observing  that the terms of the series have alternating signs, are decreasing and go to zero,
one can let $n \to \infty$, get a convergent series, which has a positive limit since its
first term is positive.  The limit  exists over  all $X$ since the variation between $X_n\le X \le X_{n+1}$
goes to zero as well.
This proves the claim.

We now choose $\Psi(u)= \Phi(u)$ in the claim,  noting that  Lemma ~\ref{le41}(5) asserts 
the positive decreasing hypothesis holds, and the claim gives Wintner's result
(\ref{eq454}). The functional equation in Lemma ~\ref{le41}(2) then gives
$\Xi_{0}^{(-1)}(t)<0$ when $t < 0$, which proves assertion (2) in the 
case $\lambda=0$.
It is immediate that $e^{\lambda u^2} \Phi(u)$ is also positive and decreasing
for  $\lambda \le 0$, whence the claim  applies similarly to establish (2).
$~~~\Box$\\


\section{Value Sets of $\xi^{(-1)}(s)$: Proof of Theorem~\ref{th22}}

The basic idea behind the lower bound in  this theorem is given by the following two facts.
\begin{enumerate}
\item
As $t \to \infty$  the function
$\xi^{(-1)}(\sigma+it)$ approaches $i A_0$  uniformly on any  vertical
strip $\sigma_1 \le \sigma \le \sigma_2$, where $-\infty < \sigma_1 < \sigma_2 < \infty$.
Thus  for any value $\cst \ne iA_0$ all the solutions to $\xi^{(-1)}(s) = \cst$
in the strip must lie below some finite bound $t \le C$, where $C$ depends on
$\sigma_1, \sigma_2$. 

\item As $t \to -\infty$
the  function $\xi^{(-1)} (\sigma+it)$ approaches $-i A_0$ uniformly on any vertical
strip $\sigma_1 \le \sigma \le \sigma_2$. Thus  for any value $\cst \ne -iA_0$ all the solutions to \\
$\xi^{(-1)}(s) = \cst$
in the strip must lie above some finite bound $t \ge -C$, where $-C$ depends on
the values $\sigma_1, \sigma_2$. 
\end{enumerate}
These two facts follow directly from Theorem~\ref{th21}, using the well known fact 
that $|\xi(s)| \to 0$ as $|t| \to \infty$, uniformly on any vertical strip.
(This may be proved following Lemma \ref{le43}(1).)
We fix a vertical strip,
which without loss of generality includes the line $Re(s) = \frac{1}{2}$ in
its interior.
Then for  $\sigma_0 \in [\sigma_1, \sigma_2]$ we have
$$
\xi^{(-1)}(\sigma_0+it) = \xi^{(-1)}(\frac{1}{2} + it) + \int_{1/2}^{\sigma_0} \xi(\sigma+ it) d\sigma.
$$
Theorem ~\ref{th21} now gives  $\xi^{(-1)}(\frac{1}{2} + it) \to iA_0$,
 as $t \to \infty$, and $ \xi^{(-1)}(\frac{1}{2} + it) \to -iA_0$ as $t \to -\infty.$
 The uniform bound on $|\xi(s)| \to 0$ as $|t| \to \infty$  in the strip then 
shows that the  integral on the right can be bounded by $\epsilon$ in absolute value for
large enough $|t|$ (depending on $\sigma_1, \sigma_2$), and the facts follow.

The proof of Theorem \ref{th22}  obtains  a lower bound 
using  a quantitative version of the two facts above,  
determining the dependence of the constants $C$ above on  the width of
the strip, chosen to have $Re(s)=\frac{1}{2}$ as its central line. 
The upper bound is  obtained by analyzing the rapid growth of
$\xi(s)$ on horizontal lines of constant $t$, which comes from  the gamma
 factor in $\xi(s)$. \medskip

We commence the proof.
Using the symmetries of the function $\xi^{(-1)}(s)$, it suffices to prove the results (1) and (2)
for a zero $\xi^{(-1)}(\rho)= \cst$ with $\rho= \sigma + it $
in  the first quadrant region $\sigma \ge \frac{1}{2}$ and $t \ge 0$.
In this proof we treat $\cst$ as  fixed, and all constants $C_j$ given in the proof
will depend on $\cst$. We divide the first quadrant region into three subregions
which we treat separately.

The first case considers  the subregion    $\frac{1}{2} \le \sigma \le 2$, and $t \ge 0$.
 We assert that
$|\xi^{(-1)}(s) - iA_0| \to 0$  as $t \to \infty$ uniformly in this range of $\sigma$.
From the assertion we may conclude
that for any $\cst \ne iA_0$ the solutions to $\xi^{(-1)}= \cst$ are confined
to a compact region, which proves the theorem in this case.
The assertion immediately follows  from the result of Theorem ~\ref{th21}(1) 
(for $\lambda=0$)  that gives $\xi^{(-1)}(\frac{1}{2} + it) \to i A_0$
as $t \to \infty$, combined with the bound
$$
|\xi^{(-1)}(\sigma + it) - \xi^{(-1)}(\frac{1}{2}+ it)| \le C_{4} e^{-\frac{\pi}{4} |t|} (|t|+1)^{\frac{5}{2}},
$$
which follows from  Lemma~\ref{le43}(1) by integration on a horizontal line.

The second case is the subregion  $\sigma \ge 2$ and $t \ge  4 \pi e$, which is the main case.
To prove the bounds (1) and (2) for  this case, we will obtain lower and upper bounds
of the required form  on $\sigma$ as a function of $t$.
The lower bound (for $\cst \ne iA_0$)
asserts  there is a constant $C_{5}$ (depending on $\cst$) 
such that $|\xi^{(-1)} (s_0)-\cst| \ne 0$
for $s_0= \sigma_0+ it$,  whenever 
\begin{equation}\label{eq621}
\frac{1}{2} \le \sigma_0 \le \frac{\pi}{2}\left( \frac{t}{\log t} \right) - C_{5} \frac{t}{(\log t)^2}.
\end{equation}
We begin with
$$
\xi^{(-1)}(s_0) = \xi^{(-1)}(\frac{1}{2} +it) + \int_{\frac{1}{2}}^{\sigma_0} \xi (\sigma+ it ) d \sigma.
$$
We assume $\cst \ne iA_0$, and set $\delta= |\cst- i A_0|>0.$
We have $\xi^{(-1)}(\foh+iT) \to iA_0$ as $T \to \infty$, and thus 
one  has $|\xi^{(-1)} (\foh+iT)- \cst| \ge  \frac{1}{2} \delta$ for all $T$ larger than some constant $T_0$. Note, however, that we can assume that $|\xi^{(-1)} (\foh+it)- \cst| \ge  \frac{1}{2} \delta$ holds for all pairs $\sigma + it$ satisfying (\ref{eq621}) with $t \ge 4\pi e$ 
since we can increase the size of $C_{5}$ such that (\ref{eq621}) 
will have no solutions for $t < T_0$.

We next show one can pick $C_{5}$ large enough  that when $\sigma_0$ satisfies  (\ref{eq621})
we have the  estimate, valid for $t  \ge 4 \pi e$,
\begin{equation}\label{eq623}
 \int_{\frac{1}{2}}^{\sigma_0} |\xi (\sigma+ it )| \,d \sigma \le \frac{1}{4} \delta.
\end{equation}
We use Lemma \ref{le43} (1) to bound  the integral from $\sigma=\frac{1}{2}$
to $\sigma=2$ by $\frac{1}{8}\delta$.
For the remaining integral with
$\sigma$ satisfying (\ref{eq621}) we use the upper bound in Lemma \ref{le43}(2),
noting that in this range
 $$
 \arctan \left(\frac{t}{\sigma} \right)= \frac{\pi}{2} - \frac{\sigma}{t} +O \left(\frac{\sigma^2}{t^2} \right),
 $$
to obtain 
\begin{eqnarray*}
 \int_{2}^{\sigma_0} |\xi (\sigma+ it )| d \sigma
 &\le & C_{6} |t|^{3/2} 
 \int_{2}^{\sigma_0} \exp\left(\frac{\sigma}{2}\log |\frac{\sigma+it}{2\pi e}| -\frac{\pi}{4}t+ O(t/\log t) \right) d\sigma\\
& \le & 
C_{7} |t|^{3/2} e^{-\frac{\pi}{4} t}
 \int_{2}^{\sigma_0} \exp\left(\frac{\pi}{4} (\frac{t}{\log t}) \log t - C_{5} \frac{t}{\log t}+ O(\frac{t}{\log t}) \right) d\sigma\\
& \le & C_{8} |t|^{5/2}
  \exp\left( - C_{5} \frac{t}{\log t}+ O(\frac{t}{\log t}) \right). \\
  \end{eqnarray*}
Keeping in mind that we are free to choose $C_{5}$ as large as necessary, we note that it is possible to choose $C_{5}$ large enough, depending on $\cst$,
to make the exponential term above smaller than $\exp(-C_{9} \frac{t}{\log t})$,
where $C_{9}$ is large enough that $\exp(-C_{9} \frac{t}{\log t}) \le
\frac{1}{8}\delta$ for all $t \ge 4 \pi e$. Thus we establish (\ref{eq623}).

Now the triangle inequality gives

\bea
|\xi^{(-1)}(s_0) - \cst| & \ge & |\cst - iA_0| - |iA_0 - \xi^{(-1)}(\foh + it)| - |\xi^{(-1)}(\foh + it) - \xi^{(-1)}(s_0)| \nonumber \\
& \ge & \delta - \frac{\delta}{2} - \frac{\delta}{4} > 0,
\eea
as asserted. This bound applies to all $\cst \ne  iA_0$, for $t \ge 4 \pi e$ and it similarly applies
for $\cst \ne - iA_0$   in the lower half-plane region $t \le -4 \pi e$.
Thus it gives the lower bound asserted in (1), for $\cst \ne \pm iA_0$.
The upper bounds
in (1) and (2) assert that there is a constant $C_{10}$ (depending on $\cst$)  such that 
for any fixed $\cst$ (including $\cst= \pm iA_0$) one
has $|\xi^{(-1)} (s_0)-\cst| \ne 0$ whenever 
\begin{equation}\label{eq632}
 \sigma_0 \ge \frac{\pi}{2} \left( \frac{t}{\log t} \right) +C_{10} \frac{t}{(\log t)^2}.
\end{equation}
It suffices to prove that $|\xi^{(-1)} (s_0)|> |\cst|$ holds when
(\ref{eq632}) holds. 
To show this upper bound, we will use the following analytic lemma.

\begin{lem} \label{lem:arg}
Suppose that $f:[0,\infty) \rightarrow \mathbb{C}\setminus \{0\}$ is continuous, and that the total variation of $\arg(f)$ on the interval $[a,b]$ is at most  $\theta < \pi$. Then
\be \left|\int_a^b f(x)dx\right| \geq \cos\left(\frac{\theta}{2}\right) \int_a^b |f(y)|\d y \ee
\end{lem}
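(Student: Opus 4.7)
The strategy is to rotate $f$ by a constant phase so that its values all lie in a narrow sector, then compare the integral to the real part of the rotated integral. Since $f$ is continuous and nowhere zero, the map $x \mapsto f(x)/|f(x)|$ is a continuous map from $[a,b]$ into the unit circle, and hence lifts to a continuous function $\phi : [a,b] \to \R$ with $f(x) = |f(x)| e^{i\phi(x)}$. The hypothesis on total variation of $\arg(f)$ is exactly the statement that the total variation of $\phi$ on $[a,b]$ is at most $\theta$, so setting $\phi_{\max} = \sup_{[a,b]} \phi$ and $\phi_{\min} = \inf_{[a,b]} \phi$ we have $\phi_{\max} - \phi_{\min} \le \theta$.

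Choose the midpoint angle $\phi_0 := (\phi_{\max}+\phi_{\min})/2$. Then $|\phi(x) - \phi_0| \le \theta/2$ for every $x \in [a,b]$. Since $\theta/2 < \pi/2$ and cosine is decreasing on $[0,\pi/2]$, we get the pointwise lower bound
\[
\Re\bigl(e^{-i\phi_0} f(x)\bigr) = |f(x)| \cos\bigl(\phi(x) - \phi_0\bigr) \ge \cos(\theta/2)\,|f(x)|.
\]

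Integrating this inequality over $[a,b]$ yields
\[
\Re\!\left(e^{-i\phi_0}\int_a^b f(x)\,\d x\right) \ge \cos(\theta/2) \int_a^b |f(x)|\,\d x.
\]
The left-hand side is at most $\bigl|\int_a^b f(x)\,\d x\bigr|$ (since the modulus bounds the real part), which gives the desired inequality.

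The whole argument is essentially a one-line geometric observation once the continuous lifting $\phi$ is in place; the only nontrivial point is that continuity and non-vanishing of $f$ are needed precisely to make sense of $\arg(f)$ as a continuous real-valued function, so that the hypothesis on total variation translates into a uniform sector bound. I do not expect any other obstacle.
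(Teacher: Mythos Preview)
Your proof is correct and is essentially the same argument as the paper's: both choose a central angle $\phi_0$ (the paper calls it $\beta$) so that $\arg f$ lies within $\theta/2$ of it, then observe that the component of $f$ in the direction $e^{i\phi_0}$ is at least $\cos(\theta/2)|f|$ pointwise. Your version is slightly more explicit about lifting $\arg f$ to a continuous real-valued function, but the substance is identical.
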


\begin{proof}
Note that since the total variation of $\arg(f)$ on the interval $[a,b]$ is equal to $\theta < \pi$, there exists $\beta \in \mathbb{R}$ such that $\arg(f(x)) \in [\beta-\frac{\theta}{2}, \beta + \frac{\theta}{2}]$ for $x \in [a,b]$. Then for $v_1 = e^{i\beta}$ and $v_2 = e^{i(\beta+\pi/2)}$, there exist real valued functions $u$ and $w$ such that $f(x) = u(x)v_1 + w(x)v_2$. Since $v_1$ and $v_2$ are orthogonal, 
$$
\left|\int_a^b f(y) d y \right| = \left|\left(\int_a^b u(x) d x\right)v_1 + \left(\int_a^b w(y) d y\right)v_2\right| \geq \left|\int_a^b u(x) d x\right|.
$$
Finally, since $\arg(f) \in [\beta - \frac{\theta}{2}, \beta + \frac{\theta}{2}]$, we have that $u(x) \geq \cos(\frac{\theta}{2}) |f(x)|$, so 
$$
\left|\int_a^b f(y) d y \right| \geq \left|\int_a^b u(x) d x\right| \geq \cos\left(\frac{\theta}{2}\right) \int_a^b |f(x)| d x.
$$
\end{proof}

We write $\xi^{(-1)}(\sigma_0 + it) = \xi^{(-1)}(\foh+it) + \int_{1/2}^{\sigma_0} \xi(\sigma+it) d\sigma,$
and will use the fact that the main contribution to the size of $\xi^{(-1)}(\sigma+it)$ will come
from the integral over a small interval near its right endpoint, and the function will be
very large when (\ref{eq632}) holds.
Thus we start from the inequality
\begin{equation*}
 |\xi^{(-1)}(\sigma_0 +it)|  \geq \left| \int_{\sigma_0-2}^{\sigma_0} \xi(\sigma+it)\d \sigma \right|  - \int_{1/2}^{\sigma_0-2} |\xi(\sigma+it)| \d \sigma - |\xi^{(-1)}(\foh+it)|,
\end{equation*}
and will show that the right side is positive when (\ref{eq632}) holds.
A total variation  bound on $\arg(\xi(\sigma+it))$ is obtained via
Lemma \ref{le43}(3), taking $\delta = \frac{2\pi}{3} - (\frac{\pi+1}{2}) \approx 0.0236$, 
yielding
$$
\int_{\sigma_0-2}^{\sigma_0} \left| \frac{d \arg \xi(\sigma+it)}{d\sigma}\right|\d \sigma \leq \frac{2\pi}{3}.
$$ 
which is valid provided $\sigma_0 \ge C_{11}$ for a suitable constant $C_{11}$.
Now  Lemma \ref{lem:arg} applies to give
$$
\left| \int_{\sigma_0-2}^{\sigma_0} \xi(\sigma+it)\d \sigma \right| \ge 
\frac{1}{2} \int_{\sigma_0-2}^{\sigma_0}\left| \xi(\sigma+it)\right| \d \sigma. 
$$
We conclude that
\begin{equation} \label{eq670}
 |\xi^{(-1)}(\sigma_0 +it)|  \geq \frac{1}{2} \int_{\sigma_0-2}^{\sigma_0} \left|\xi(\sigma+it)\right| \,\d \sigma   - \int_{1/2}^{\sigma_0-2} |\xi(\sigma+it)| \d \sigma -  |\xi^{(-1)}(\frac{1}{2}+ it)|.
\end{equation}
The last term on the right has $|\xi^{(-1)}(\frac{1}{2}+it)| = O(1)$, since $\xi^{(-1)}(s)$
is bounded on the critical line.

We now obtain  from Lemma \ref{le43} (2) and Lemma \ref{le44} (1) a lower
bound for the first integral on the right hand side of (\ref{eq670}). 
Namely, for all sufficiently large $\sigma_0$ and $t \ge 4 \pi e$ there holds
\begin{eqnarray*}
\int_{\sigma_0-2}^{\sigma_0} \left|\xi(\sigma+it)\right| \,\d \sigma
& \ge & \frac{1}{2}  \int_{\sigma_0-1}^{\sigma_0} F(\sigma, t) \,\d \sigma\\
& \ge & \frac{1}{2} F(\sigma_0 -1, t),
\end{eqnarray*}
where $F(\sigma, t)$ is given by (\ref{eq333}).
On the other hand, using Lemma \ref{le43}(2)
and  Lemma \ref{le44}(2), we can pick a  constant $C_{12}$ 
large enough 
that for all $t \ge 4 \pi e$ and sufficiently large $\sigma_0$ (depending on $C_{12}$),
\begin{eqnarray*}
\int_{1/2}^{\sigma_0-2} \left|\xi(\sigma+it)\right| \,\d \sigma
& \le & C_{12} \int_{1/2}^{\sigma_0-2} F(\sigma, t) \,\d \sigma\\
& \le  & C_{13} F(\sigma_0-2, t).
\end{eqnarray*}

The function $F(\sigma, t)$ is  rapidly increasing in $\sigma$.
For $t \ge 4 \pi e$ and $\sigma_0 > 2$  comparison of the terms in (\ref{eq445}) yields
\begin{equation}\label{eq561}
\frac{F(\sigma_0-1, t)}{F(\sigma_0 -2, t)}
 \ge \frac{1}{\sqrt{2 \pi e}} ((\sigma_0-2)^2 + t^2)^{\frac{1}{4}}.
\end{equation}
This fact implies
for all sufficiently large $\sigma_0$,
 $\frac{1}{4}F(\sigma_0 -1, t) \ge 4C_{13} F(\sigma_0 -2, t),$ whence half of the first term on the
 right in (\ref{eq670}) already dominates the second integral. It remains to choose
 $\sigma_0$ large enough as a growing function of $t$ that the remaining half of the 
 absolute value of the first term also dominates
 $|\xi^{(-1)}(\frac{1}{2}+ it)| +|\cst|= O(1)$ on the right side of (\ref{eq670}).
  We show the  lower bound
 in  (\ref{eq632}) achieves this, taking \\
 $\sigma_0 \ge \frac{\pi}{2}(\frac{t}{\log t}) + C_{5} \frac{t}{(\log t)^2}$
 with sufficiently large $C_{5}$. By the monotonicity property in Lemma \ref{le44}(1) it suffices to consider 
 $\sigma_0 = \frac{\pi}{2}(\frac{t}{\log t})+ C_{5} \frac{t}{(\log t)^2}$, for which we obtain 
 \begin{eqnarray*}
  F(\sigma_0 - 1, t) &\ge & 
  \exp \left( - \frac{\pi \, \log (2\pi e)}{4}(\frac{t}{\log t})+O( \frac{t}{(\log t)^2} )
  \right) \cdot
   \exp \left( \frac{\pi t}{4} + C_{5} \frac{t}{(\log t)}\right) \cdot \\
 && \cdot \exp \left( -\frac{\pi t}{4}  + \frac{\pi}{4}(\frac{t}{\log t}) + 
 O( \frac{t}{(\log t)^2} ) \right)\\
 & \ge &  \exp \left( C_{10} \frac{t}{\log t} + O(\frac{t}{(\log t)^2} \right).
  \end{eqnarray*}
Here $C_{10}= C_{5} - \frac{\pi}{4} \log (2\pi)$, so  by choosing $C_{5}$ sufficiently large, 
we can overcome the $O$-constant
in the last term and 
force  
$$
\exp \left( C_{10} \frac{t}{\log t} + O(\frac{t}{(\log t)^2}\right) > |\xi^{(-1)}(\foh+it)|+|\cst|
$$
to hold  for all $t \geq 4 \pi e$. 
We conclude that for proper choices of $C_{12}, C_{5}$ the right hand side
of (\ref{eq670}) is larger than$|\cst|$ for $t \geq 4 \pi e$ and $\sigma$ satisfying (\ref{eq632}). 
This gives, for any fixed $\cst \in \C$,
an estimate establishing the upper bound case of both (1) and (2) in the range
$t \ge 4 \pi e$.

The third case  is the subregion   $0 \le t \le 4\pi e$ and $\sigma \ge 2$. We assert  
 that $| \xi^{(-1)}(s)|$ becomes very large as $\sigma$ increases,
which for any constant $\cst \in \C$  will confine
solutions to $\xi^{(-1)}(s)= \cst$  with $0 \le t \le 4 \pi e$
to a compact region $0 \le \sigma \le C_{14}$,
and so complete the proof. 

We proceed to estimate the size of $\xi^{(-1)}(s_0)$,
with $s_0 = \sigma_0+it$,  using
$$
\xi^{(-1)}(s_0) = \xi^{(-1)} (\sigma_0 -2) + \int_{0}^t \xi(\sigma_0 - 2 + i y) dy +
\int_{\sigma_0 - 2}^{\sigma_0} \xi(\sigma + it) d\sigma.
$$
We obtain
\bea
|\xi^{(-1)}(s_0)|  &\ge& |\int_{\sigma_0 - 2}^{\sigma_0} \xi(\sigma + it) d\sigma|
- |\xi^{(-1)}(\sigma_0-2)| - \int_{0}^t |\xi(\sigma_0 - 2 + i y)| dy \nonumber \\
&\ge&  |\int_{\sigma_0 - 2}^{\sigma_0} \xi(\sigma + it) d\sigma|
-  \Big(|\xi^{(-1)}(\sigma_0-2)| + 4\pi e |\xi(\sigma_0-2)|\Big),\label{ln:xineq}
\eea
with the last inequality based on the fact that 
for fixed $\sigma$, the function $|\xi(\sigma +it)|$ is maximized on the
real axis. The argument now proceeds similarly to the case $t \ge 4 \pi e$.
Namely, the first integral can be bounded below by the use of Lemma \ref{lem:arg}.
Together with Lemma ~\ref{le43}(2) we obtain
\begin{eqnarray*}
|\int_{\sigma_0 - 2}^{\sigma_0} \xi(\sigma + it) d\sigma| &\ge&
\frac{1}{2} \int_{\sigma_0 -2}^{\sigma_0} |\xi(\sigma + it)| d \sigma \\
& \ge & \frac{1}{2}  \int_{\sigma_0-1}^{\sigma_0} F(\sigma, t) \,\d \sigma\\
& \ge & \frac{1}{2} F(\sigma_0 -1, t),
\end{eqnarray*}
where $F(\sigma, t)$ is given by (\ref{eq333}).
Since $t \le 4 \pi e$ and $\sigma_0 \ge 2$ 
 we have  
$$
F(\sigma_0 -1, t) \ge e^{- \pi^2 e} F(\sigma_0 -1, 0)
$$
directly from the definition (\ref{eq333}).
Additionally, Lemma  ~\ref{le43}(2) 
guarantees the existence of a constant $C_{15}$ such that for all $\sigma_0  \ge C_{15}$,
$$
4\pi e |\xi(\sigma_0-2)| \le (4\pi e+1) F(\sigma_0 -2,0).
$$
We also obtain
$$|\xi^{(-1)}(\sigma_0-2)| \le \int_{1/2}^{\sigma_0 -2} |\xi(\sigma)| d \sigma 
\le 4F(\sigma_0 -2,0) + \int_{1/2}^{4\pi e} |\xi(\sigma)| d \sigma,
$$
by appealing to the estimate
$$
\int_{4 \pi e}^{\sigma_1} F(\sigma, t) d\sigma \le 4 F(\sigma_1, t)
$$
which is seen to be valid for any $t \ge 0$, following the proof of Lemma~\ref{le44}(2).

Combining all of the above estimates with  (\ref{ln:xineq}), we have
$$
|\xi^{(-1)}(s_0)| \ge \frac{e^{-\pi^2 e}}{2} F(\sigma_0-1,0)-\left(\int_{1/2}^{4\pi e}|\xi(\sigma)|d \sigma + (4\pi e + 5)F(\sigma_0-2,0)\right).
$$

Because the integral in the right hand side above is a constant and the function $F(\sigma,0)$ is increasing without bound,
we can choose constants $C_{16}, C_{17} > 0$ such that for all $\sigma_0 > C_{16}$, we have
$$
|\xi^{(-1)}(s_0)| \ge \frac{e^{-\pi^2 e}}{2} F(\sigma_0-1,0)- C_{17} F(\sigma_0-2,0).
$$

Finally, we may apply the bound (\ref{eq561}) to conclude that
for all $\sigma_0 \ge C_{18}$ and $0 \le t \le  4 \pi e$ the first term on the right hand side above dominates the second enough to give
$$
|\xi^{(-1)}(s_0)|  \ge F(\sigma_0-2, 0).
$$

Since the function on the right is unbounded as $\sigma_0$
increases, by choosing $C_{14}$ sufficiently large we can guarantee that  $|\xi^{(-1)}(s_0)| > |\cst|$
holds  on the region $\sigma_0 \ge C_{14},$
$0 \le t \le 4 \pi e$.
This completes the proof of  Theorem \ref{th22}. \medskip

 \paragraph{\bf Remark.} In the exceptional case (2), for $\cst= iA_0$ and $t >0$,
 it seems possible that a stronger upper bound than (\ref{eq223}) may be valid.
  We  cannot even rule out the possibility that a $|\sigma| \le O\left(1 \right)$ upper bound
 might be valid; see  the numerical data in \S5, plotted  in Figure \ref{fig34}.
 To improve the upper bound significantly, one  would like an improved error term in (\ref{eq451a})
 that decreases exponentially in $t$.
%
\section{Numerical Results}\label{sec6}

We  report  on numerical results on
the zeros of $\xi^{(-1)}(s) - c$, kindly supplied to us 
 by Henri Cohen. These results were 
 computed using PARI. 

Table \ref{tab31} below gives values  of the first few zeros of the function $\xi^{(-1)}(s).$
They are distributed in a very regular way, consistent with Theorem \ref{th22}.
For comparison purposes we include data on averaged position of pairs of 
consecutive zeros of $\xi(s)$ (i.e. zeta zeros in the critical strip). Each zero $\rho$ of
$\xi^{(-1)}(s)$ (with $Im(\rho)>0$) off the critical line has a companion zero $1-\bar{\rho}$ and we expect
these to correspond to a pair of notrivial zeta zeros.

%
%
%
  \begin{table}\centering
\renewcommand{\arraystretch}{.85}
\begin{tabular}{|r|r|r|r||r|r|r|}
\hline
\multicolumn{1}{|c|}{$k$} &
\multicolumn{1}{c|}{$\mbox{Re}( \rho_k)$} &
\multicolumn{1}{c|}{$\mbox{Im}( \rho_k)$} &
\multicolumn{1}{c||}{$|\rho_k|$} & 
\multicolumn{1}{c|}{$\tilde{\gamma}_k$} &
\multicolumn{1}{c|}{$\gamma_{2k-1}$} & 
\multicolumn{1}{c|}{$\gamma_{2k}$}
\\ \hline
0 &          0.50000 & 0.00000 i & 0.00000 & & & ~
\\ \hline
  1 & 12.26164 & 10.74143 i   & 16.30111 & 17.57838 i & 14.13472 i &  21.02203 i  \\ \hline
  2 & 16.59401 & 18.18824 i & 24.62059 & 27.71787 i & 25.01085 i &  30.42487 i \\ \hline
  3 & 19.91864 & 24.52433 i & 31.59501 & 35.26062 i & 32.93506 i &  37.58617 i \\ \hline
  4 & 22.76123 & 30.28316 i & 37.88330 & 42.12290 i & 40.91871 i &  43.32707 i  \\ \hline
  5 & 25.30557 & 35.66576 i & 43.73121 & 48.88949 i & 48.00515 i &  49.77383 i \\ \hline
  6 & 27.64154 & 40.77783 i & 49.26344 & 54.70828 i & 52.97032 i &  56.44624 i  \\ \hline
  7 & 29.82109 & 45.68184 i & 54.55391 & 60.08941 i & 59.34704 i &  60.83177 i   \\ \hline
  8 & 31.87747 & 50.41877 i & 59.65087 & 66.09617 i & 65.11254 i &  67.07981 i  \\ \hline
  9 & 33.83352 & 55.01727 i & 64.58799 & 70.80678 i & 69.54640 i &  72.06715 i  \\ \hline
10 & 35.70571 & 59.49838 i & 69.38988 & 76.42477 i & 75.70469 i &  77.14484 i  \\ \hline
11 & 37.50640 & 63.87809 i & 74.07524 & 81.12388 i & 79.33737 i &  82.91038 i  \\ \hline
12 & 39.24515 & 68.16894 i & 78.65868 & 86.08038 i & 84.73549 i &  87.42527 i  \\ \hline
13 & 40.92954 & 72.38096 i & 83.15186 & 90.65050 i & 88.80911 i &  92.49189 i \\ \hline
14 & 42.56569 & 76.52235 i & 87.56431 & 95.26098 i & 94.65134 i &  95.87063 i  \\ \hline
15 & 44.15865 & 80.59992 i & 91.90394 & 100.07452 i & 98.83119 i &  101.31785 i  \\ \hline
16 & 45.71262 & 84.61941 i & 96.17738 & 104.58608 i & 103.72553 i &  105.44662 i  \\ \hline
17 & 47.23115 & 88.58569 i & 100.39027 & 109.09907 i& 107.16861 i &  111.02953 i  \\ \hline
18 & 48.71728 & 92.50297 i & 104.54746 & 113.09744 i& 111.87465 i &  114.32022 i  \\ \hline
19 & 50.17363 & 96.37488 i & 108.65317 & 117.50873 i& 116.22668 i &  118.79078 i  \\ \hline
20 & 51.60248 & 100.20464 i & 112.71107 & 122.15847 i & 121.37012 i &  122.94682 i \\ \hline
 \end{tabular}
\caption{Initial zeros $\rho_k$ of  $\xi^{(-1)}(s)$ in the first quadrant. 
Averaged pairs of zeta  zero ordinates  $\tilde{\gamma}_k :=\frac{1}{2}(\gamma_{2k-1}
+ \gamma_{2k})$
are included for comparison.}
\label{tab31}
\end{table}
 
Figure \ref{fig32} pictures a plot  the first 100 zeros of $\xi^{(-1)}(s)$ in each quadrant;
note the four-fold symmetry, and the fact that the zeros appear to fall on a smooth
curve. (A suitable  smooth curve that interpolates the points is given by a certain level set of the function
$F(s)= \xi^{(-1)}(s)- iA_0.$)

%
%
%

\begin{figure}
\centering
\includegraphics[scale=1.00]{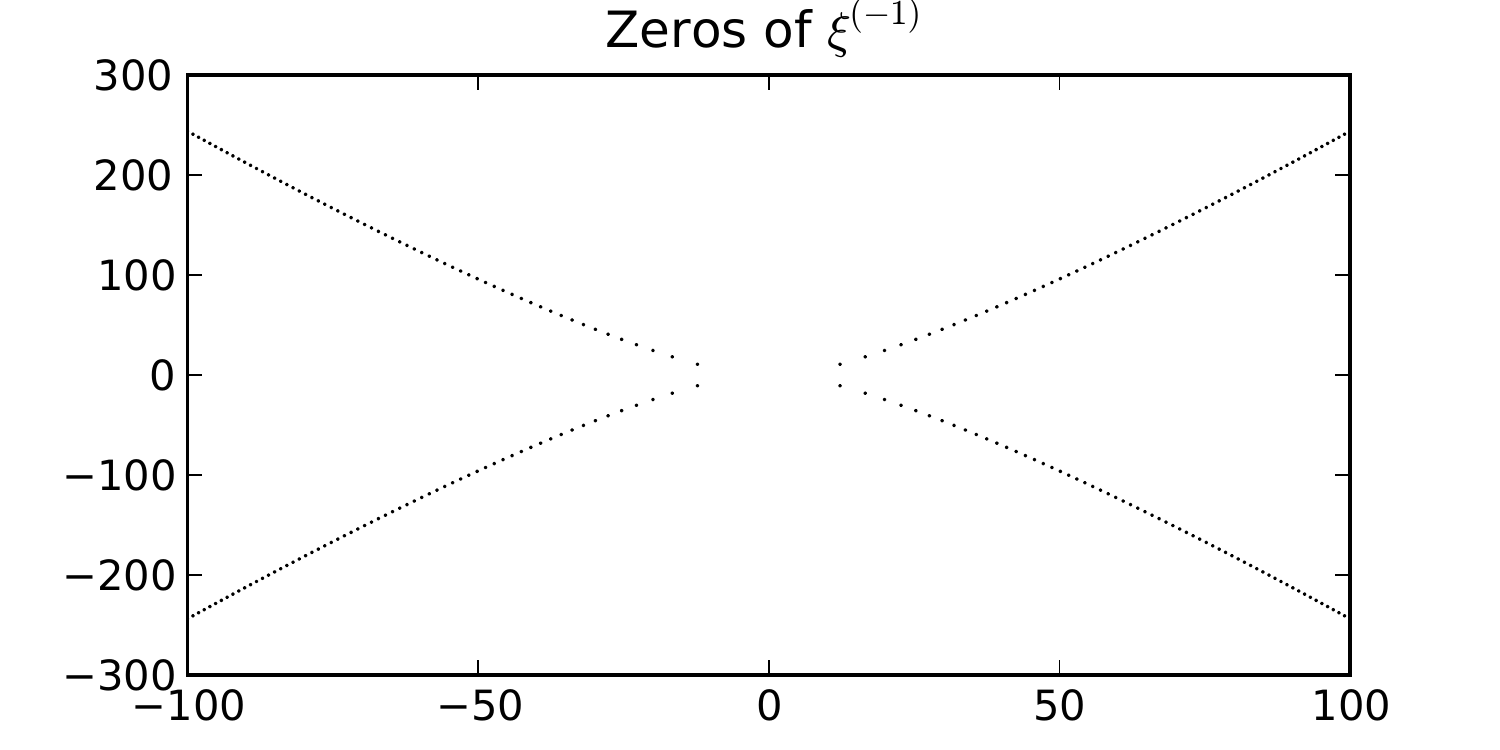}
\caption{
Plot of initial zeros of $\xi^{(-1)}(s)$ in all four quadrants.
}
\label{fig32}
\end{figure}

 Figure \ref{fig33} plots, on a smaller scale, the first $500$ zeros in the
first quadrant. There is
general  agreement with the asymptotics of Theorem \ref{th22}.

%
%
%

\begin{figure}
\centering
\includegraphics[scale=1.00]{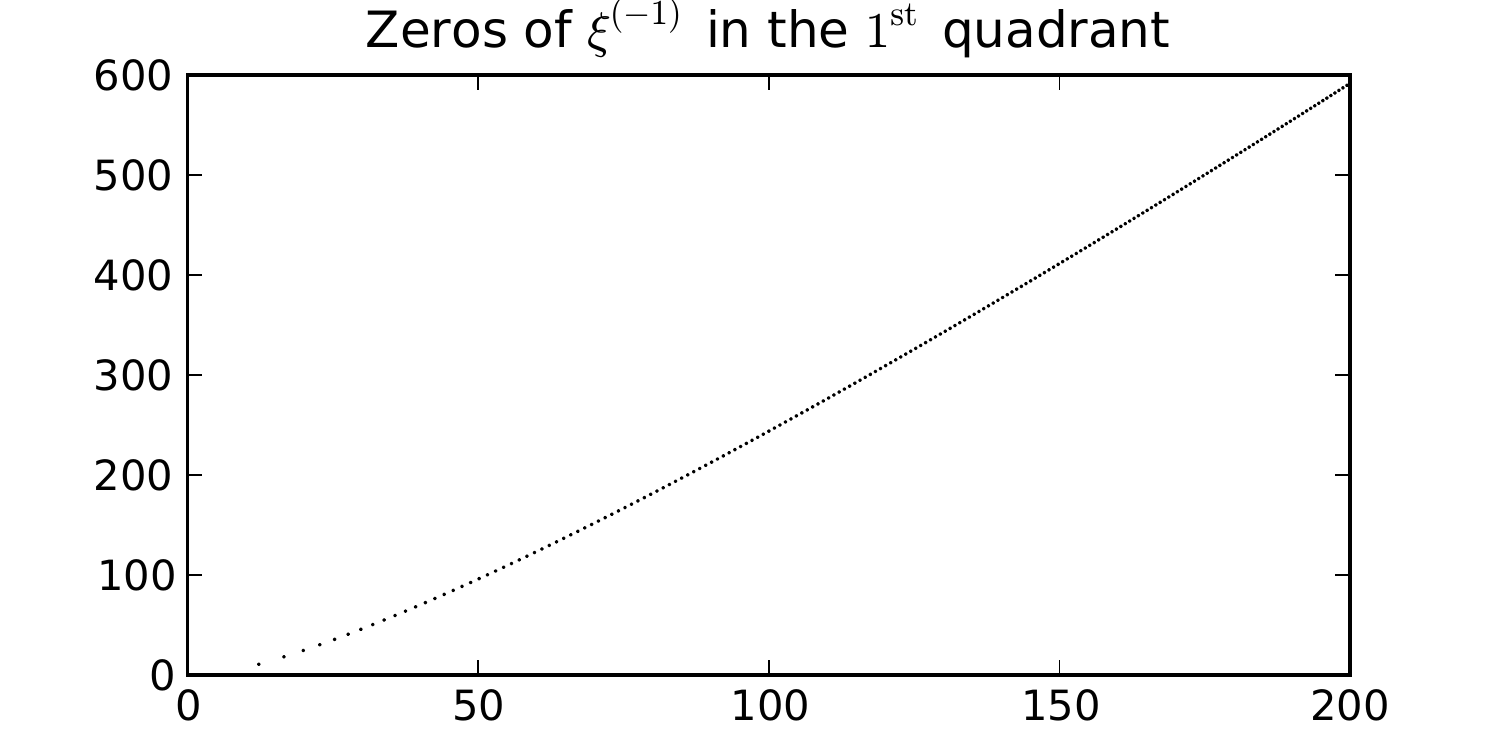}
\caption{
Plot of first $500$ zeros of $\xi^{(-1)}(s)$ in the first quadrant.
}
\label{fig33}
\end{figure}

Next we consider the distribution of zeros of $\xi^{(-1)}(s; -iA_0) :=\xi^{(-1)}(s) - i A_0$.
This function has
$$
\lim_{t \to \infty} \xi^{(-1)}(\frac{1}{2}+ it; -iA_0)=0.
$$
These are plotted in Figure \ref{fig34} to height $180$. This data hints that infinitely
many zeros lie on the critical line. Perhaps this will be  a positive proportion of all zeros.
However, as  the height increases more zeros seem
to go off the line and up to  height $500$ only about $1/3$ of
the zeros are on  the critical line. (Note that Theorem~\ref{th22}
shows that $\xi^{(-1)}(s; -iA_0)$ has only finitely
many zeros on the critical line in the lower half plane.)
%
%
%

\begin{figure}
\centering
\includegraphics[scale=1.00]{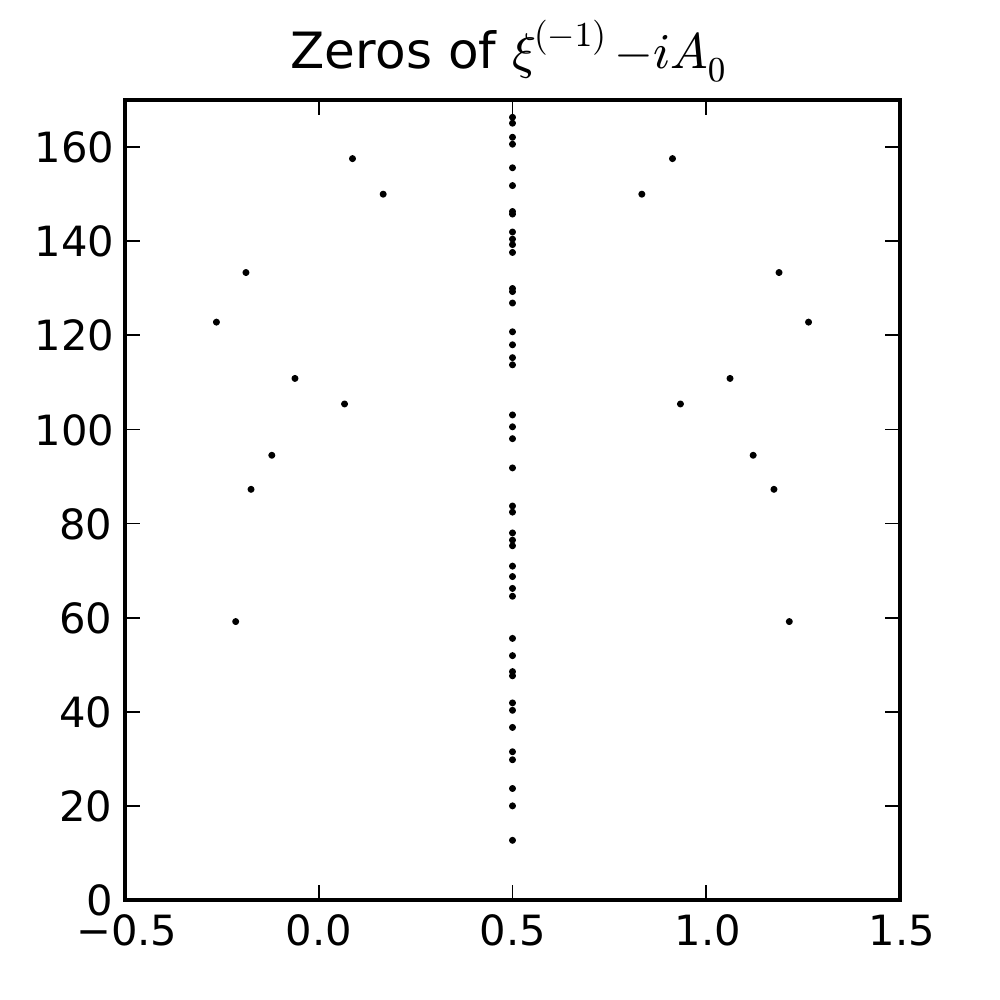}
\caption{
 Plot of zeros of $\xi^{(-1)}(s)- A_0 i$, where $A_0 \approx 2.80668$,
  to height $T=160$
}
\label{fig34}
\end{figure}

These computational results suggest a number of further questions.

{\em Question 1.} Let the zeros of $\xi^{(-1)}(s)$ have its zeros
$\rho= \sigma+it$ in the quadrant $\sigma\ge \frac{1}{2}, t\ge 0$
arranged  $\rho_n= \sigma_n + i t_n$
 arranged in order of increasing $|\rho_n|$
have the property that  $\sigma_n > \sigma_{n-1}$ and $t_n > t_{n-1}$?

The numerical evidence supports a positive answer to this question.
Furthermore, numerical  differencing of the abscissas and ordinates of the first 500
zeros uncovers regular trends. One may expect that there is 
an asymptotic expansion in functions  of $n$ for  the  spacings.

{\em Question 2.} What properties of the zeros of a
function like $\xi^{(-1)}(s)$, which do not lie on the
critical line,  would be sufficient  to  imply that the zeros
of its derivative  $\xi(s)$ would all lie on the critical line?

Would monotone increase of 
 the imaginary parts of the zeros in the first quadrant as the real part 
 increases, as in Question 1,  be a sufficient condition?  For an possibly related situation
 involving the $\xi$-function, where such a monotonicity
 implies the RH, see Haglund \cite{Hag11}.

{\em Question 3.} How would the GUE spacing distribution of zeros of $\xi(s)$ manifest iteself
in terms of the  distribution of the zeros of $\xi^{(-1)}(s)$?

Recall that the GUE hypothesis (see Odlyzko \cite{Odl87}, Berry and Keating \cite{BK99}, 
 Katz and Sarnak \cite{KS99}) asserts that when  zeros are ordered
by increasing ordinates, and zero spacings at
height $T$ are rescaled by a factor $\frac{1}{2\pi}\log T$ to have
expected spacing $1$,  then the distribution of spacings from height $ 0\le T \le X$
should as $X \to \infty$ approach a non-trivial continuous limiting distribution,
called the GUE distribution; this distribution arises
as an eigenvalue spacing distribution  in random matrix theory
for the Gaussian Unitary Ensemble.

The data above, in Table \ref{tab31} and
Figure \ref{fig33}, while rather limited, seems to suggest that 
the zero spacings of $\xi^{(-1)}(s)$ are extremely regular.
No fluctuations in spacings analogous to GUE seem visible
in this data. In contrast, fluctuations in zeta zero spacings are already
evident by $T=100$.

%

\begin{minipage}[t]{2.05in}
{Jeffrey C. Lagarias\\
Department of Mathematics\\
University of Michigan\\
Ann Arbor, MI 48109-1043, USA\\
\email{lagarias@umich.edu}}
\end{minipage}\bigskip

\begin{minipage}[t]{2.05in}
{David Montague\\
Department of Mathematics\\
University of Michigan\\
Ann Arbor, MI 48109-1043, USA}
\end{minipage}

\end{document}